\newtheorem{thm}{Theorem}[section]
\newtheorem{lemma}[thm]{Lemma}
\newtheorem{rem}{Remark}[section]
\def\B{\mathcal B}
\def\Bn{L_{B^{1/n}}}
\def\B2{L_{B^{1/2}}}
\def\Bs{L_{B^{s/n}}}
\def\Bs+{L_{B^{(s+1)/n}}}
\def\Xn{L_{B^{1/n}}}
\def\X2{L_{B^{1/2}}}
\def\Xs{L_{B^{s/n}}}
\def\Xs+{L_{B^{(s+1)/n}}}
\def\Xn{L_{X^{1/n}}}
\def\X2{L_{X^{1/2}}}
\def\Xs{L_{X^{s/n}}}
\def\Xs+{L_{X^{(s+1)/n}}}
\def\Xn{L_{X^{1/n}}}
\def\X2{L_{X^{1/2}}}
\def\Xs{L_{X^{s/n}}}
\def\Xs+{L_{X^{(s+1)/n}}}
\def\CC{\mathbb C}
\def\HH{\mathbb H}
\def\RR{\mathbb R}
\numberwithin{equation}{section}
\begin{document}

\title{Adjoining Roots and Rational Powers  of Generators in $PSL(2,\RR)$  and Discreteness}
\author{Jane Gilman}

\address{Mathematics and Computer Science Department, Rutgers University, Newark, NJ 07102}

\email{gilman@rutgers.edu, jgilman@math.princeton.edu}%

\thanks{Some of this work was carried out while the author was partially supported as Visiting Research Scholar by Princeton University}


\keywords{Fuchsian group, Roots, discreteness criteria. Poincar{\'e} Polygon Theorem}

\date{revision of \today}


\maketitle

\begin{abstract} Let $G$ be a finitely generated group of isometries of $\HH^m$, hyperbolic $m$-space,  for some positive integer $m$. 
  The discreteness problem is to determine whether or not $G$ is discrete. Even in the case of a  two generator non-elementary subgroup of $\HH^2$ (equivalently $PSL(2,\mathbb{R})$) the   problem requires an algorithm \cite{GM,JGtwo}. If $G$ is discrete, one can ask when adjoining an $n$th root of a generator results in a discrete group.

   In this paper we address the issue for pairs of hyperbolic generators in $PSL(2, \RR)$ with disjoint axes and obtain necessary and sufficient conditions for adjoining roots for the case when the two hyperbolics have a hyperbolic product and are what as known as {\sl stopping generators} for the Gilman-Maskit algorithm \cite{GM}. We give an algorithmic solution in other cases. It applies to all other types of pair of generators that arise in what is known as the {\sl intertwining case}. The results are geometrically motivated and stated as such, but also can be given computationally using the corresponding matrices.
\end{abstract}

 \noindent Assume that $B$ is hyperbolic or parabolic, then  $\hat{G}= \langle A, B^{1/2} \rangle$ is discrete and free $\iff$ either

     \noindent  but  neither intersection is a vertex of the hexagon interior to $\HH^2$.
\vskip .1in

\noindent If $A^{-1}B$ is primitive elliptic,  then  $\hat{G}= \langle A, B^{1/2}  \rangle$ is discrete  $\iff$
 either

 (i) $L_{B^{1/2}} \cap Ax_A \ne \emptyset$ or
 (ii) $L_{B^{1/2}} \cap Ax_{A^{-1}B}  \ne \emptyset$ or
 (iii) $L_{B^{1/2}} \cap L_A \ne \emptyset$ and $A^{-1}B$ is primitive.

 \noindent but neither intersection is a vertex of the hexagon interior to $\HH^2$.
\noindent {\bf (P)} If $A^{-1}B$ is parabolic, then  $\hat{G}= \langle A, B^{1/2} \rangle$ is discrete and free $\iff$
 either

 (i) $L_{B^{1/2}} \cap Ax_A \ne \emptyset$ or
 (ii) $L_{B^{1/2}} \cap Ax_{A^{-1}B}  \ne \emptyset$

 \noindent but neither intersection is a vertex of the hexagon interior to $\HH^2$.
\vskip .1in
\noindent  $\hat{G}= \langle A, B^{1/2}  \rangle$ is discrete  $\iff$
 either

 (i) $L_{B^{1/2}} \cap Ax_A \ne \emptyset$ or

 (ii) $L_{B^{1/2}} \cap Ax_{A^{-1}B}  \ne \emptyset $

 \noindent but neither intersection is a vertex of the hexagon interior to $\HH^2$.

\vskip .15in
In all other cases, the group is not free and one applies the algorithm to the case where $A^{-1}B$ is elliptic to determine discreteness.
\end{thm}
\end{comment}

\section{Introduction}

Let $G$ be a finitely generated group of isometries of $\HH^m$,  hyperbolic $m$-space.
  The discreteness problem is to determine whether or not $G$ is discrete. Even the  two generator non-elementary discreteness problem in  $\HH^2$ (or equivalently $PSL(2,\mathbb{R})$)  requires an algorithm. One such algorithm is the Gilman-Maskit algorithm \cite{GM}, termed the GM algorithm for short and also known as the intertwining algorithm, taken together with the intersecting axes algorithm \cite{JGtwo}. The GM algorithm proceeds by considering  geometric types of the pairs of generators. If $G$ is discrete, one can ask when adjoining an $n$th root of a generator results in a discrete group.

Here we answer the discreteness question for adjoining for roots and rational powers of one or both  generators in non-elementary two generator discrete subgroups of $PSL(2,\RR)$ found by the GM algorithm, the intertwining algorithm.
When the algorithm is applied to a two generator group, the pair of generators at which discreteness is determined are termed {\sl discrete stopping generators} and the generators correspond to a certain geometric configuration which we review below (see Section \ref{sec:STOPPER})
. The main result of the this paper is discreteness conditions on adjoining roots of a discrete stopping generator. 
In all other cases, that is the cases of non-stopping generators, discreteness can be determined by running the algorithm using the root as one of the generators.

The problem of adjoining roots has been addressed in \cite{Beard, GCJ, Parker}. Beardon gave a necessary and sufficient condition for a discrete group generated by a pair of parabolics and  Parker obtained results for rational powers of a pair of generators in the case where neither generator was hyperbolic. In \cite{GCJ} discreteness conditions for hyperbolics are given by inequalities that depend upon the cross ratio and multipliers.
    Since our technique also applies to some of the intertwining cases that Beardon and Parker addressed but are different than their techniques, we include those cases, too.

The organization of this paper is as follows:
In sections
 \ref{sec:not} and \ref{sec:preliminaries} notation is fixed and prior results needed are summarized. Results for square roots, arbitrary roots and their powers appear as  Theorems \ref{thm:neccsuffSQRT}, \ref{thm:nthroots}, \ref{thm:ratls/n} and  \ref{thm:BPE}. Their proofs are given in sections \ref{sec:SQroots},\ref{sec:nth}, and \ref{sec:PE}. For example, in section \ref{sec:SQroots} we find necessary and sufficient conditions (Theorem \ref{thm:neccsuffSQRT}) for a group generated  by a pair of hyperbolics discrete stopping generators with hyperbolic product to be discrete and free when a square root of a stopping generators is added. The results are  extended to rational powers (Theorems \ref{thm:nthroots} and \ref{thm:ratls/n}) in Sections \ref{sec:nth} and
 \ref{sec:PE}. In section \ref{sec:allmainTHMS} these theorems are extended and stated in greater generality as Theorems \ref{thm:main}, \ref{thm:ratlpowersX}  and \ref{thm:moregen}.

\section{Preliminaries: Notation and Terminology \label{sec:not}}

We recall that elements of $Isom(\HH^2)$ and $Isom(\HH^3)$
are classified by their geometric action or equivalently by their traces when considered as elements of $PSL(2,\RR)$
or $PSL(2,\CC)$.
 In $\HH^2$ they are either hyperbolic, parabolic or elliptic and we use $H$,$P$ and $E$ to denote such an element type. We consider their action using the unit disc model for $\HH^2$.  A hyperbolic elements fixes two points on the boundary of the unit disc, its ends,  and the geodesic interior connecting these two points, its axis.  A parabolic fixes one point on the boundary of the unit disc and an elliptic fixes one point interior to the disc. In $\HH^3$ an elliptic element has an axis; in $\HH^2$ it is customary to consider the fixed point of an elliptic, its axis and in both $\HH^3$ and $\HH^2$ to consider the fixed point of the parabolic on the boundary of hyperbolic space its axis. All transformations fix their axes.

For any pair of points $r$ and $s$ in $\overline{\HH}^2$, we let $[r,s]$ denote the unique geodesic connecting the points. If $r$ is on the boundary we consider the point $r$ to be an (improper) geodesic following \cite{Fench},  denote it by  $[r,r]$. 

A hyperbolic transformation moves points along its axis a fixed distance in the hyperbolic metric, called it {\sl translation length} toward one end, the attracting fixed point on the boundary  and away from the other, the repelling fixed point. An elliptic transformation rotates by an angle $\theta$ about its fixed point where $\theta/2$ is the angle between the two geodesics $L$ and$M$ meeting at the fixed where the elliptic is the product of reflections in $L$ and $M$.

If $X$ is any geodesic, there is an orientation reversing  element of order two that fixes $X$ and its ends that is called {\sl the half-turn about $X$} and denoted by $H_X$. It is a reflection through $X$\footnote{In $\HH^3$ a half-turn about a geodesic is the orientation preserving element of order two fixing the geodesic point-wise. This can be viewed as the product of a reflection through the geodesic in any hyperbolic plane containing the geodesic and a reflection in the plane itself. Since the restriction of a half-turn to $\HH^2$ is a reflection, it is customary to use $H_M$ there instead of $R_M$. }.

Any hyperbolic element of $Isom(\HH)^2$  can be factored in many was as the product of two half turns about geodesics perpendicular to its axes. Here the two half-turn geodesics intersect the axis  half the translation length apart. An elliptic element it is the product of two half-turn geodesics  intersecting the axis (a point) and making an angle of $\theta/2$ with each other there. For a parabolic the half-turns geodesics intersect at the fixed point on the boundary.

The discreteness algorithm consists of two independent parts: {\sl the intertwining algorithm} \cite{GM}  addresses  pairs of hyperbolics with disjoint axes and other types of pairs that follow in this case and the intersecting axes case \cite{JGtwo}. The case of hyperbolics with intersecting axes and those that follow from it will be treated elsewhere. The ideas are similar but requires additional and different notation.

Note that we let $Ax_X$ denote the axis of $X$ if $X$ is any transformation but for clarity we sometimes also write for the axes (i) if $X$ is parabolic, the point on the boundary of the unit disc,   $pp_X$ or $[p_X,p_X]$ using notation for an improper line as in Fenchel \cite{Fench} and (ii) if $X$ is elliptic $p_X$ with fixed point interior to the unit disc.

Following \cite{Parker}, we note that an {\sl $n$th root of a hyperbolic or parabolic} (and thus any rational power) is  defined unambiguously. To define an {\sl $n$th root of an elliptic} we need to consider that it is always conjugate to $z \mapsto Kz$ considered as an isometry in $PSL(2,\CC)$ and to take the root there and then conjugate back. Further, a {\sl geometrically primitive} root of an elliptic is an element that corresponds to a minimal rotation in the cyclic group it generates. Thus if $E$ is a primitive rotation so is $E^{-1}$. An element is {\sl algebraically primitive} if it generates the entire cyclic group, but here we do not consider such elements to be primitive. The algorithm assumes one can determine whether or not an  elliptic is of finite order.

The figures here are schematic. All geodesics are perpendicular to the boundary the unit disc. Blue circles are used to indicate intersections that are perpendicular. Figures for some representative cases are presented, but these are not exhaustive.


\section{Preliminaries: the  GM algorithm, $G$, and Hexagons} \label{sec:preliminaries}

Assume that $G=\langle A, B \rangle$ is a non-elementary two generator subgroup of $PSL(2,\RR)$.
The Gilman-Maskit discreteness algorithm considers the intertwining cases. 
The GM algorithm  begins with a pair of hyperbolic generators
with disjoint axes and at each step either stop and outputs that the group is discrete or  that the group is not discrete, or outputs the {\sl next} pair of generators to consider.
An implementation of the algorithm can begin with any geometric type of pairs of generators that arise in the algorithm.
The generators where the algorithm outputs discreteness are termed the {\sl discrete stopping generators}

Given $A$ and $B$, there will be a unique  geodesic $L$, the {\sl core geodesic},  that is a common perpendicular to their axes. We assume that $L$ is oriented from the axis of $A$ towards the axis of $B$.

 Further, given $L$, we can find geodesics $L_A$ and $L_B$ such that $A= H_LH_{L_A}$ and $B = H_LH_{L_B}$ so that $A^{-1}B = H_{L_A}H_{L_B}$.
We let $3G = \langle H_L, H_{L_a}, H_{L_B} \rangle$. We note that $3G$ and $G$ are simultaneously discrete or non-discrete as $G$ is a subgroup of index $2$ in $3G$.

The axes and half-turn lines determine a geometric configuration, a hexagon (see \cite{Fench}). For a given $3G$ the  hexagon may or may not be convex. (See Figure \ref{fig:Convex-Non-convex} for examples of a convex and a non-convex hexagon.)  The hexagon will have three axis sides and three half-turn sides. In $\HH^2$ one or more of the axis sides may reduce to a point that is interior or on the boundary,  but the half-turn sides will not.

The geodesics that determine the sides of the hexagon will have subintervals that actually correspond to sides of the hexagon and it will generally be clear from the context when whether we are talking about a side or the entire geodesic.
For any positive integer $n$, there are geodesics $L_{B^2}, L_{B^3}, L_{B^4} \cdots $  such that $B = L_{B^{n-1}}L_{B^n}$ and $B^n =H_LH_{L_{B^n}}$. We can also find geodesics $\B2$ or $\Bn$ with $B^{1/2}= H_L H_{{\B2}}$ and
$B^{1/n}= H_L H_{\Bn}$. When $B$ is hyperbolic these geodesics are perpendicular to $Ax_B$. When $B$ is parabolic or elliptic, the geodesics pass trough the point that is $Ax_B$.
\begin{figure}
\begin{center}
\includegraphics[height=2in]{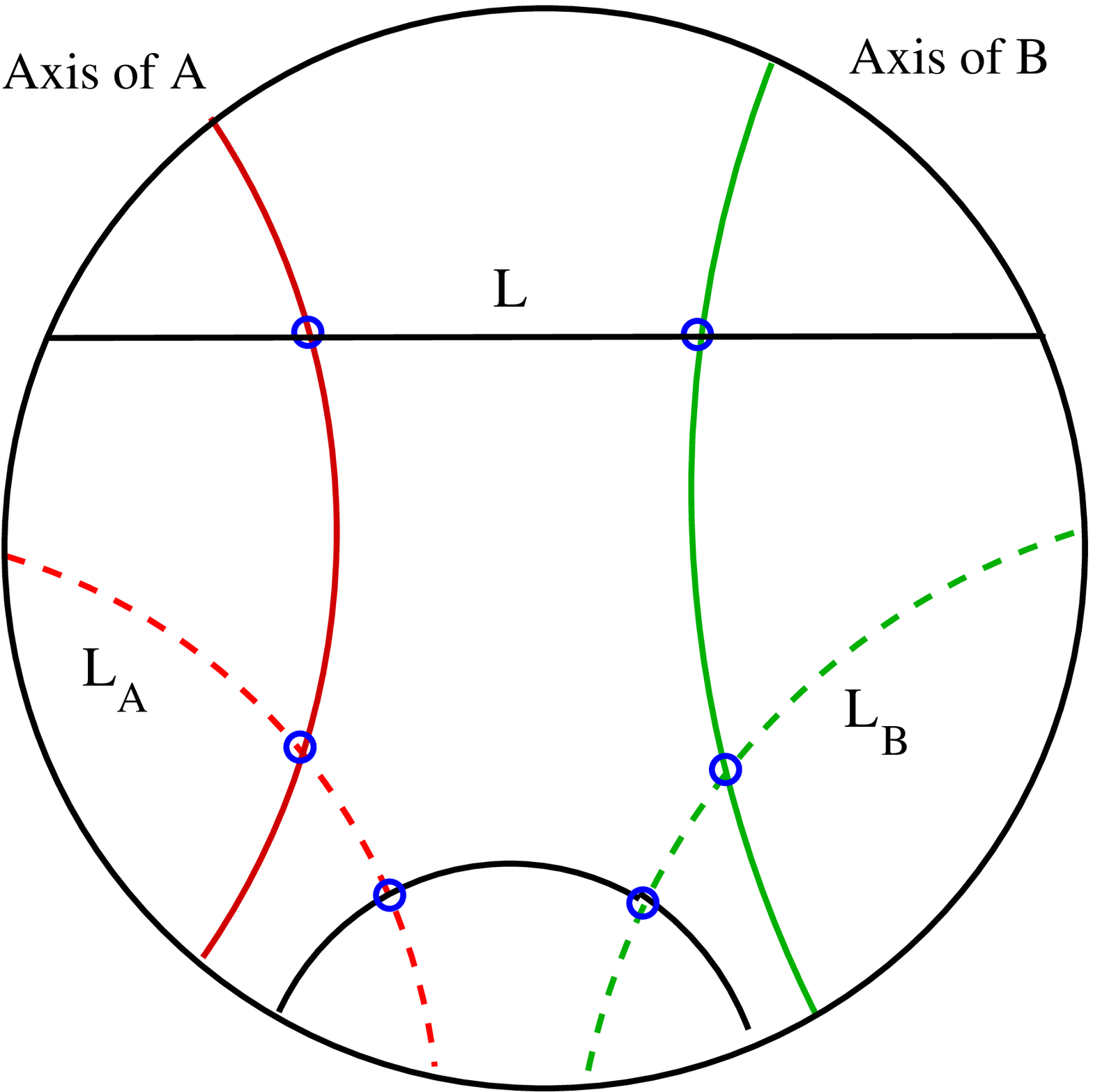}

\vskip .05in
$\;\;\;\;$ \includegraphics[height=2in ]{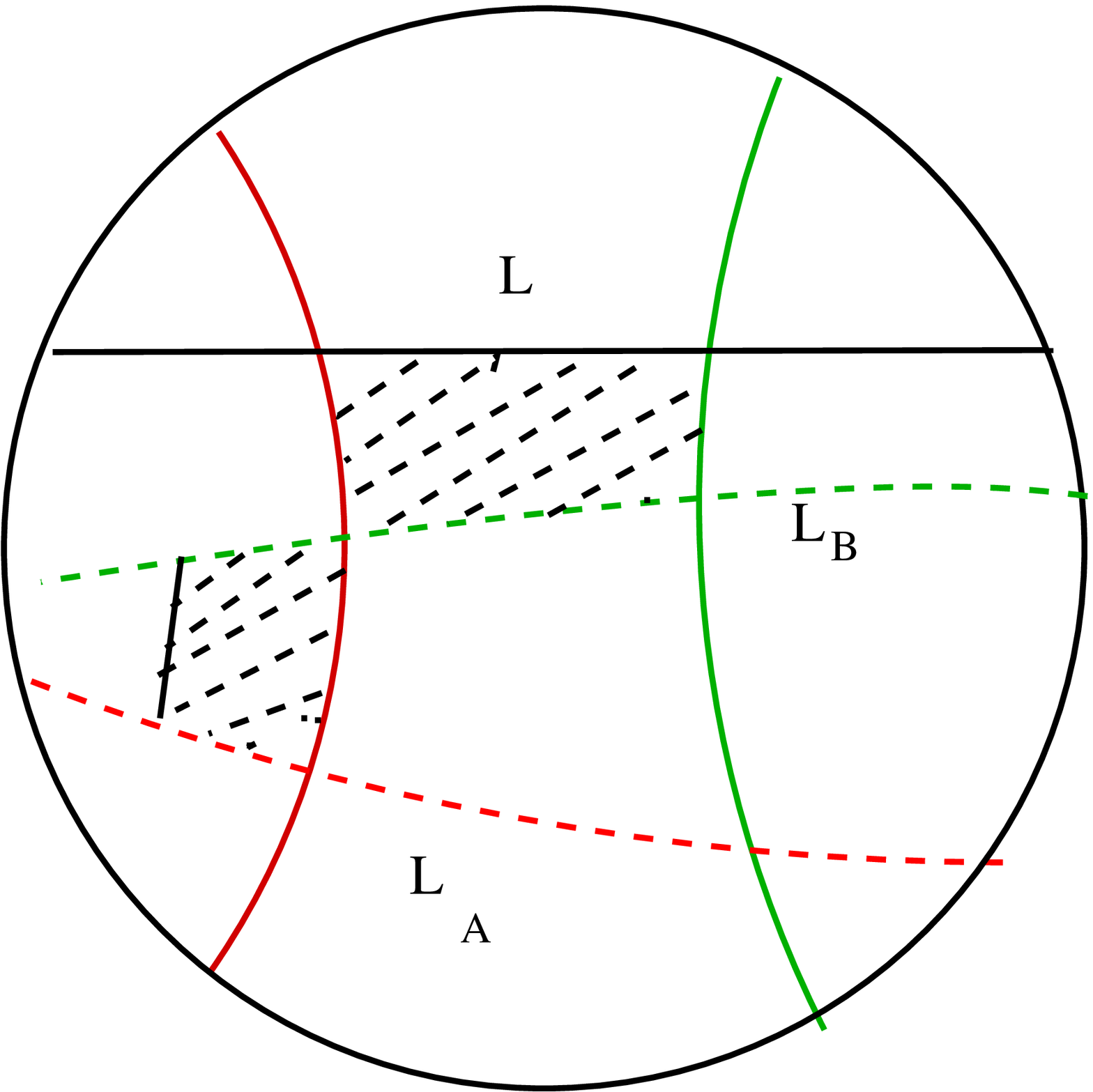}
\caption{A convex hexagon  and a self-intersecting, nonconvex hexagon \label{fig:Convex-Non-convex}}
\end{center}
\end{figure}

\section{First Result} \label{sec:first}
We begin with the following lemma.
\begin{lemma} Let $\mathcal{G}$ be any group generated by half-turns about three disjoint geodesics, $L,M,N$. If the half turn geodesics  bound a region, that is, no one half-turn geodesic  separates the other two, then $\mathcal{G}$ is discrete and free.

If one or more pairs of half-turns intersect, the product of the pair is elliptic or parabolic. If the product is
 elliptic of finite order and the angle between the half-turn geodesics of the elliptic is half a primitive angle or parabolic, then the group is discrete providing the half-turn geodesics still bound a region and the transformations are oriented so that the vertex angle hypotheses of the Poincar{\'e}{\'e} Polygon Theorem apply. If the pairs of half-turns only intersect on the boundary, then the group is also free. \end{lemma}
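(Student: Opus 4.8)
The three half-turns $H_L,H_M,H_N$ are reflections of $\HH^2$ in the geodesics $L,M,N$, each an involution, and the plan is to realise the region they bound as a fundamental polygon and apply the Poincar\'e Polygon Theorem. First I would make the hypothesis precise: that no one of $L,M,N$ separates the other two means that, taking for each geodesic the closed half-plane meeting the other two and intersecting these three half-planes, one obtains a non-empty \emph{convex} hyperbolic polygon $R$ whose boundary consists of one side along each of $L,M,N$ together with, between two disjoint geodesics, a free arc of $\partial\HH^2$ (an improper side in Fenchel's sense \cite{Fench}); where two of the geodesics meet, $R$ has an honest vertex there, finite or ideal. Each geodesic side is to be paired with itself by the corresponding reflection.

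For the first assertion, with $L,M,N$ pairwise disjoint, I would argue by ping-pong, which avoids verifying local finiteness. Let $D_L,D_M,D_N$ be the three pairwise \emph{disjoint} open half-planes bounded by $L,M,N$ and disjoint from $R$; then each $H_X$ carries the complement (in $\HH^2$) of $\overline{D_X}$ into $D_X$, so for a basepoint $x_0\in R$ and any non-empty reduced word $w$ in $H_L,H_M,H_N$ one gets $w(x_0)\in D_X$ for the appropriate $X$, whence $w(x_0)\ne x_0$ and $w\ne 1$. This shows $\mathcal G\cong\ZZ/2*\ZZ/2*\ZZ/2$ and that the orbit $\mathcal G x_0$ is discrete, hence $\mathcal G$ is discrete; equivalently, this is the Klein--Maskit combination theorem applied to the three cyclic factors, with $R$ a common fundamental domain.

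For the intersecting case I would first record the classification: the product of two reflections in geodesics meeting at an interior point is the rotation through twice the angle of intersection, hence elliptic, and is parabolic when the geodesics meet on $\partial\HH^2$. Assuming that every intersecting pair $\{H_X,H_Y\}$ has product either parabolic or elliptic of finite order $k_{XY}$ with the angle between $X$ and $Y$ equal to $\pi/k_{XY}$ --- exactly half the primitive angle $2\pi/k_{XY}$ of $\langle H_XH_Y\rangle$ --- and that $R$ is still a region bounded by the three geodesics, I would check the hypotheses of the Poincar\'e Polygon Theorem for $R$: at an interior intersection the vertex stabiliser is the dihedral group $\langle H_X,H_Y\rangle$ of order $2k_{XY}$, and the angle condition $\pi/k_{XY}$ together with the stated orientation normalisation is precisely the edge-cycle (angle-sum) condition there, while the cycle condition at an ideal vertex is automatic for a parabolic pair. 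The theorem then gives that $\mathcal G$ is discrete with fundamental polygon $R$ and presentation generated by $H_L,H_M,H_N$ with relations $H_X^2=1$ and $(H_XH_Y)^{k_{XY}}=1$ for each interior intersecting pair. When every intersection lies on $\partial\HH^2$ there are no finite-vertex relations, the presentation reduces again to $\ZZ/2*\ZZ/2*\ZZ/2$, and $\mathcal G$ is free --- or, more directly, the ping-pong of the previous paragraph still applies, since the $D_X$ remain disjoint, sharing at most an ideal point.

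The step I expect to be the main obstacle is the clean verification of the Poincar\'e Polygon Theorem hypotheses for $R$: the candidate tessellation by images of $R$ must be shown locally finite and the quotient complete even though $R$ is non-compact and of infinite area, and the edge cycles must be tracked carefully --- this is exactly where ``half a primitive angle'' and the orientation of the transformations are used, and where the non-separation hypothesis enters, namely to guarantee convexity of $R$. In the disjoint and all--boundary--intersection cases the ping-pong route bypasses this and makes freeness transparent; in the general elliptic case one could instead build $\mathcal G$ by iterating the combination theorem, amalgamating one finite-vertex dihedral subgroup at a time, but the direct appeal to Poincar\'e's theorem is shortest and matches the phrasing of the statement.
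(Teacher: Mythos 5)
Your proposal is correct, and for the heart of the lemma --- the intersecting case with elliptic or parabolic vertex products --- it takes exactly the route the paper does: verify the vertex (angle-cycle) hypotheses for the bounded region and invoke the Poincar\'e Polygon Theorem. The paper's own proof is a one-line citation of that theorem (via \cite{Beard} or \cite{Maskit}), so everything you write about edge cycles, the dihedral vertex stabilisers of order $2k_{XY}$, and the role of ``half a primitive angle'' is a legitimate unpacking of what that citation is implicitly asserting rather than a departure from it. Where you do diverge is in the disjoint and boundary-intersection cases: you prove discreteness and freeness by ping-pong on the three pairwise disjoint half-planes $D_L,D_M,D_N$ (equivalently, Klein--Maskit combination of the three $\ZZ/2$ factors), whereas the paper folds these cases into the same appeal to Poincar\'e's theorem. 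Your alternative buys something concrete: it sidesteps the local-finiteness and completeness verifications for a non-compact, infinite-area fundamental region, and it makes the free-product structure $\ZZ/2 * \ZZ/2 * \ZZ/2$ (hence freeness of the index-two orientation-preserving subgroup) immediate rather than read off from a presentation. The single appeal to Poincar\'e's theorem is shorter and uniform across all cases, which is presumably why the paper states it that way. One small point worth keeping in your write-up: your observation that the non-separation hypothesis is exactly what guarantees convexity of $R$, and that the half-planes $D_X$ remain disjoint (sharing at most an ideal point) when intersections occur only on $\partial\HH^2$, is the substance behind the paper's unproved claim that the group is ``also free'' in that case.
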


\begin{proof}

Apply the Poincar{\'e}{\'e} Polygon Theorem \cite{Beard} or \cite{Maskit}.

\end{proof}

\section{Geometric Stopping Generators and Discrete Stopping Configurations \label{sec:STOPPER}} 

 Let $H$,$P$ and $E$ denote respectively  a hyperbolic, parabolic, or elliptic generator.


Note that the stopping configurations which are all hexagons, may look like hyperbolic  pentagons, quadrilaterals or triangles because the axes may be  points in $\overline{\HH^2}$ and also note that  when we discuss discrete stopping generators that include elliptic we assume the generator to be geometrically primitive.

We illustrate some, but not all,  figures for the discrete stopping cases in Figure \ref{fig:someSTOP}.
\begin{thm} For each of the eleven possible ordered stopping configurations the hexagon is convex and satisfies the vertex hypotheses of the Poincar{\'e} Polygon theorem in the case of an elliptic generator.
\end{thm}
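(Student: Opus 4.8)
The plan is to proceed case by case through the eleven ordered stopping configurations, organizing them by the geometric type ($H$, $P$, or $E$) of the three relevant elements $A$, $B$, and $A^{-1}B$, and in each case to verify two things: first that the hexagon with sides $Ax_A, L, Ax_B, L_B, Ax_{A^{-1}B}, L_A$ is convex, and second that when one of the generators is elliptic the angles at the corresponding vertices are submultiples of $\pi$ so that the Poincar\'e Polygon Theorem applies. The starting point is the defining property of discrete stopping generators: the GM algorithm \cite{GM} terminates with a discreteness output precisely because the half-turn geodesics $L$, $L_A$, $L_B$ have reached a configuration in which no one of them separates the other two, i.e.\ they bound a region in the sense of the Lemma in Section~\ref{sec:first}. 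So the real content is to translate ``the three half-turn lines bound a region'' together with the stopping hypotheses into ``the associated hexagon is convex,'' and to track the vertex angles.

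First I would set up the hexagon carefully from the factorizations $A = H_L H_{L_A}$, $B = H_L H_{L_B}$, $A^{-1}B = H_{L_A} H_{L_B}$: the three axis sides $Ax_A$, $Ax_B$, $Ax_{A^{-1}B}$ alternate with the three half-turn sides $L_A$, $L$, $L_B$, where $Ax_A \perp L, L_A$, $Ax_B \perp L, L_B$, and $Ax_{A^{-1}B}$ meets $L_A, L_B$ (perpendicularly if $A^{-1}B$ is hyperbolic, at the elliptic fixed point otherwise, at the parabolic point on the circle otherwise). The key geometric input is that $L$ is the common perpendicular to $Ax_A$ and $Ax_B$ and that $L$ does not separate $L_A$ from $L_B$. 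From this one reads off that the four right angles at $Ax_A \cap L$, $L \cap Ax_B$, etc., together with the fact that the three axis sides are ``spread apart'' by the bounding-region condition, force all six interior angles to be at most $\pi$. Where an axis side degenerates to a point (a generator being parabolic, or an axis meeting the boundary), I would note that the hexagon visually becomes a pentagon/quadrilateral/triangle but the convexity statement is unaffected — a degenerate ``vertex'' on $\partial\HH^2$ contributes angle $0$.

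For the vertex hypotheses of the Poincar\'e Polygon Theorem in the elliptic case, the point is that when $A^{-1}B$ is a geometrically primitive elliptic of order $q$, the half-turn lines $L_A$ and $L_B$ meet at its fixed point at angle $\pi/q$ (half the primitive rotation angle), and similarly if $A$ or $B$ is elliptic. Since $H_{L_A} H_{L_B}$ generates a cyclic group of order $q$ and the surrounding translates of the region fit around that vertex, the cycle of angles at that vertex sums to $2\pi/q \cdot (\text{number of region-copies})  = 2\pi$, which is exactly what Poincar\'e's theorem requires; I would cite the Lemma of Section~\ref{sec:first} and the stated orientation conventions for this. The hard part, and where I expect the argument to need the most care, is the bookkeeping for the non-convex-looking configurations of Figure~\ref{fig:Convex-Non-convex} versus the genuine stopping configurations: one must show that the eleven \emph{stopping} configurations are exactly the ones in which no separation occurs, ruling out the self-intersecting hexagons, and this is really where the definition of ``discrete stopping generators'' via the algorithm's halting condition does the work. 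I would therefore lean on the already-established characterization (Section~\ref{sec:STOPPER}) that these eleven ordered types are precisely the terminal outputs, and reduce the theorem to the per-type angle check, which is then routine.
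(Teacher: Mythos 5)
Your proposal is correct and takes essentially the same route as the paper: the paper's own proof is nothing more than the enumeration of the eleven terminal configurations of the GM algorithm (with page references to \cite{GM} for each stopping case), exactly the reduction you make at the end when you ``lean on the already-established characterization that these eleven ordered types are precisely the terminal outputs.'' Your additional uniform argument --- deriving convexity from the right angles at the axis/perpendicular intersections together with the bounding-region halting condition, and checking that a primitive elliptic forces the half-turn lines to meet at half the primitive rotation angle so the Poincar\'e vertex condition holds --- is sound and in fact supplies detail that the paper delegates entirely to the citations.
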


\begin{figure}\begin{center}
 \includegraphics[height=2in]{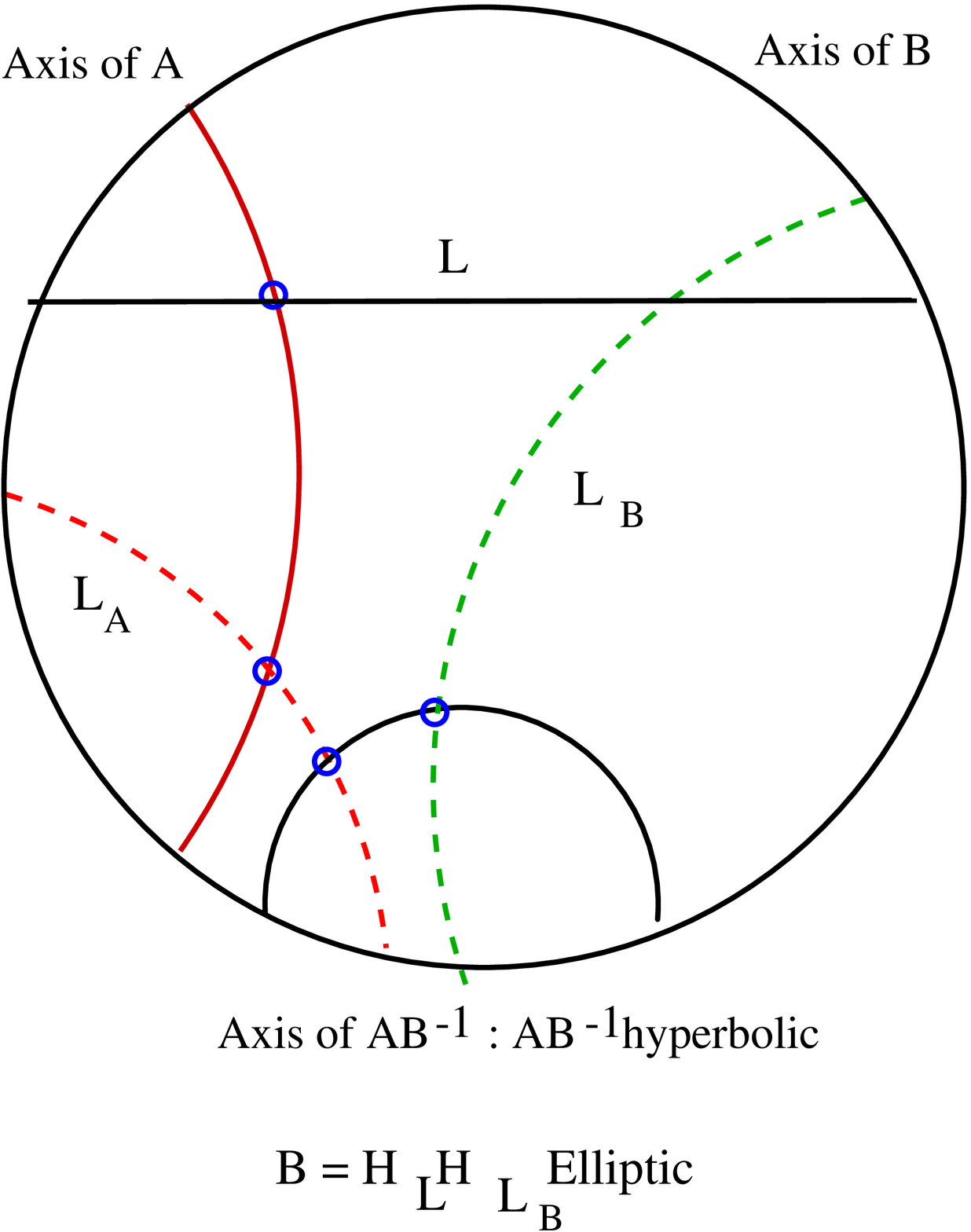}
\includegraphics[height=2in]{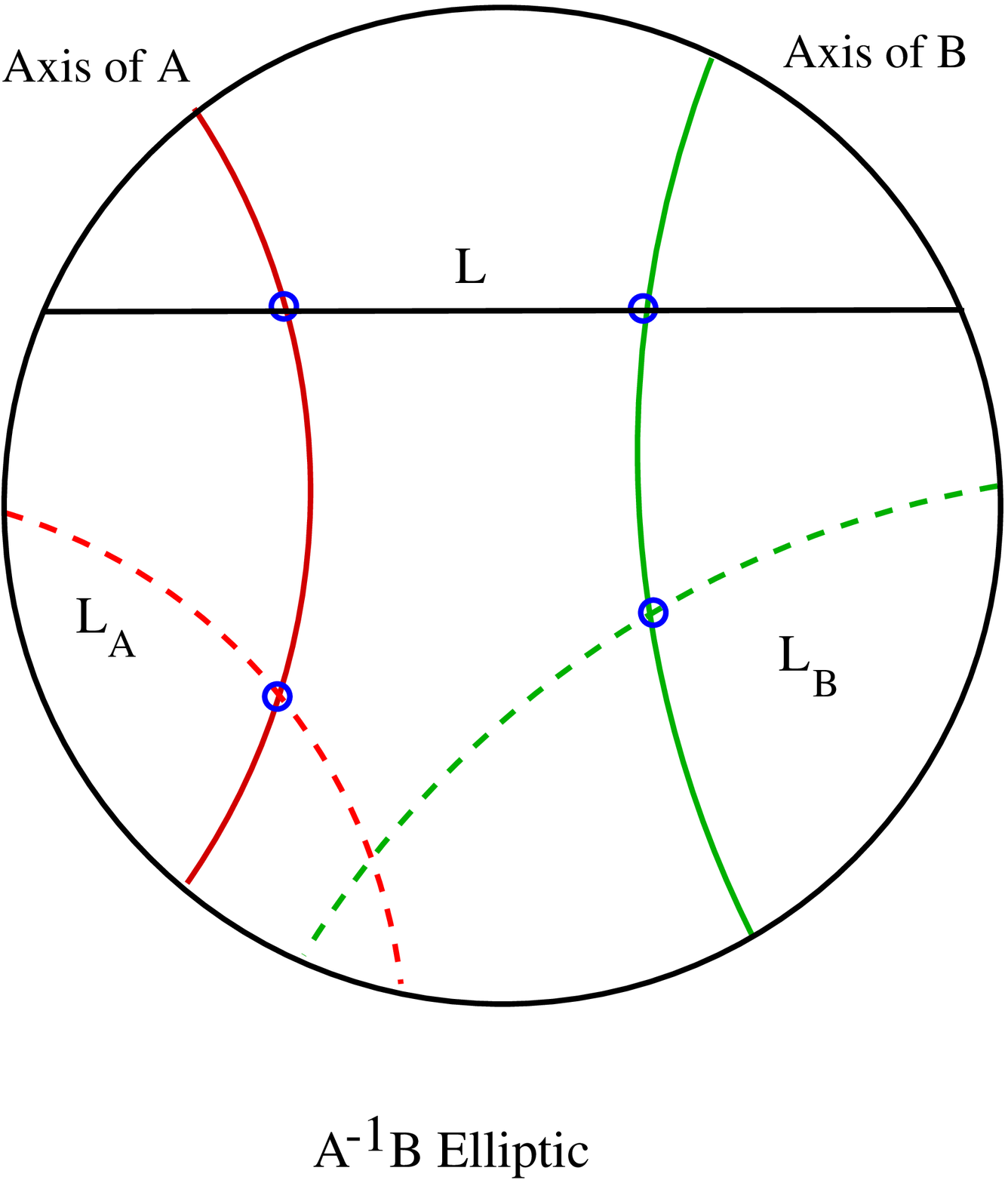} \end{center}
$\;$
\begin{center}
\includegraphics[height=2in]{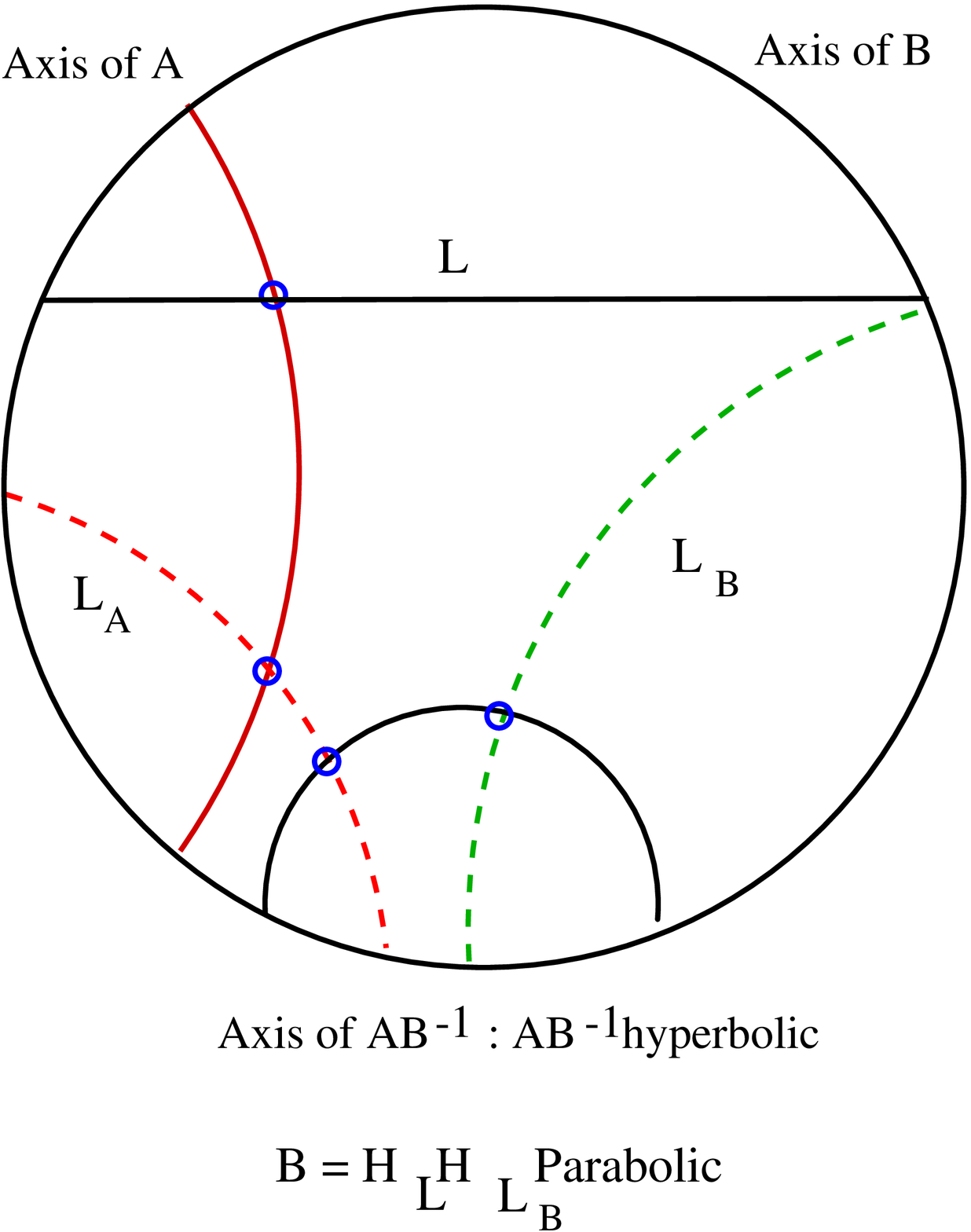}
 $\;\;$ \includegraphics[height=2in]{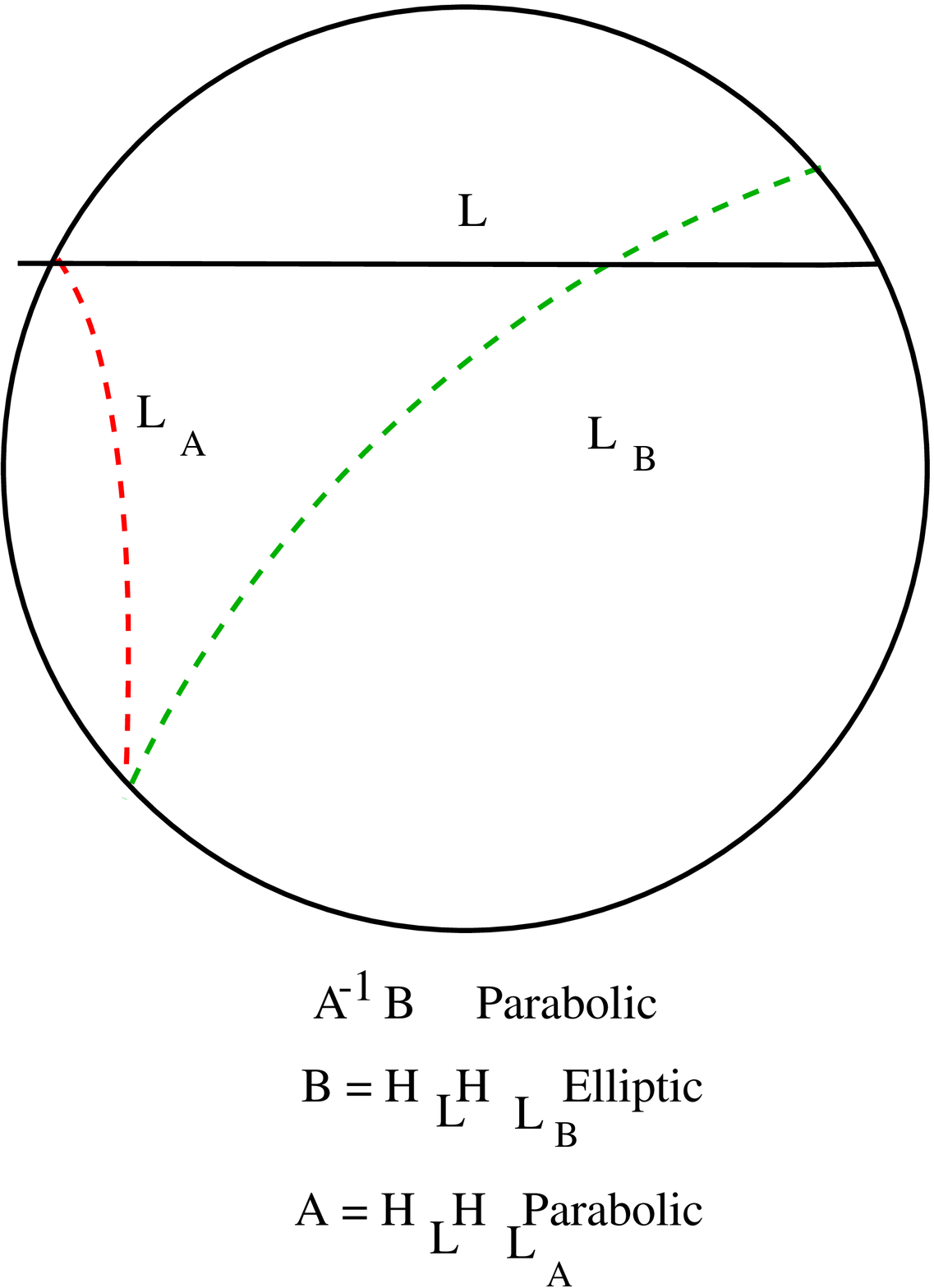}
 \end{center}

\caption{Some discrete stopping configuration $HEH, HHE,HPH,PEP$ \label{fig:someSTOP}} \end{figure}
\begin{proof}

For each pair ordered pair of generators where the order of the elements is determined by type, there are three types of subcases which are H,P, or E. We list those that are discrete stopping configurations
following \cite{GM}: 
page 16 (I-7); page 24, (II-5); page 25 (III-5), page 26 (IV-5);  page 29 Theorem;   page 30 (VI-6),  page 30 (VI-9), they are

\begin{enumerate}
\item HxH
(i) H:
 $Ax_A, L, Ax_B, L_B, Ax_{A^{-1}B},L_A $
\item HxP
(i) H: $Ax_A, L, pp_B, L_B, Ax_{A^{-1}B}, L_A$
\item PxP
(i) H: $pp_A, L, pp_B, L_B, AX_{A^{-1}B}, L_A$; (ii) P: $pp_A, L, pp_B, L_B, pp_{A^{-1}B}, L_A$
\item HxE 
(i)  H: $Ax_A, L, p_B, L_B, Ax_{A^{-1}B},L_A$
(ii) P: $Ax_A, L, p_B, L_B, pp_{A^{-1}B},L_A$
\item PxE 
(i) H: $pp_A, L, p_B, L_B, Ax_{A^{-1}B},L_A$
(ii) P: $p_A, L, p_B, L_B, pp_{A^{-1}B},L_A$
\item ExE 
(i) H:  $p_A, L, p_B, L_B, Ax_{A^{-1}B},L_A$
(ii) P: $p_A, L, p_B, L_B, pp_{A^{-1}B}, L_A$
(iii) E: $p_A, L, p_B, L_B, p_{A^{-1}B}, L_A$
\end{enumerate} \end{proof}

Because some half-turn lines reduce to points, for clarity we identify the geometry of the stopping configurations more specifically as follows in the next Theorem. The references to \ref{fig:someSTOP} and \ref{fig:Convex-Non-convex} are to be taken modulo a permutation of the order of the generators illustrated in the figures.

 \begin{thm} \label{thm:allstop}
The {\sl discrete stopping configurations}  are
\begin{enumerate}
\item \label{item:HH} {\rm HxHxH} The configuration is a convex hexagon as shown in Figure \ref{fig:Convex-Non-convex}
\item \label{item:HP} {\rm HxPxH}  The configuration is a pentagon, the convex hexagon of \ref{item:HH} where the $Ax_B$ is replaced by a point on the boundary of the unit disc where $L$ and $L_B$ meet. See Figure  \ref{fig:someSTOP}.

   \item \label{item:PP} {\rm PxP}  The configuration is a pentagon, the convex hexagon of \ref{item:HP} where the $Ax_A$  and the $Ax_B$ are replaced respectively by points $p_A$ and $p_B$ on the boundary of the unit disc where one of the following happens: $L_A$ and $L_B$ are disjoint so the figure looks like a quadrilateral or  $L_A$ and $L_B$ intersect on the boundary so the figure looks like a triangle with all each vertex on the boundary of the disc.
\item \label{item:HE} {\rm HxE}  Here  the $Ax_B$ is replaced by a point interior to the unit disc where $L$ and $L_B$ meet,  and either $L_A$ and $L_B$ are disjoint, so that $A^{-1}B$ is hyperbolic and the figure
         is a pentagon (see Figure  \ref{fig:someSTOP}) or $L_A$ and $L_B$ intersect on the boundary with $A^{-1}B$ parabolic and the figure is a quadrilateral.
    \item \label{item:PE} {\rm PxE}  The configuration is a pentagon, the convex hexagon of \ref{item:HP} where the $Ax_A$  and the $Ax_B$ are replaced respectively by points $p_A$ on the boundary and  $p_B$ interior to the unit disc with $L$ the geodesic connecting these two points and where $L$ and $L_A$ meet at $p_A$ and $L$ and $L_B$ at $p_B$.  If $L_A$ and $L_B$ are disjoint, the figure is a quadrilateral and if $L_A$ and $L_B$ intersect on the boundary or in the interior , the figure is a triangle (see Figure \ref{fig:someSTOP}).

       \item \label{item:EE} {\rm ExE} There are three cases: The hexagon reduces to

\noindent    (i) a quadrilateral  with  $L_A$ and $L_B$ disjoint
when we have {\rm ExExH}: $p_A, L, p_B, L_B, Ax_{A^{-1}B},L_A$.
\noindent (ii) a triangle with two interior vertices and one on the boundary of the disc when we have {\rm ExExP}: $p_A, L, p_B, L_B, pp_{A^{-1}B}, L_A$

\noindent (iii) a triangle with all interior vertices when we have
\noindent {\rm ExExE}:  $p_A, L, p_B, L_B, p_{A^{-1}B}, L_A$.        \end{enumerate}

\end{thm}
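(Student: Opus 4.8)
The plan is to analyze each of the eleven ordered stopping configurations case by case, tracing through the relevant pages of \cite{GM} where each was produced, and to verify two things for each: that the resulting hexagon (possibly degenerate, with some axis sides shrinking to points interior to or on the boundary of the unit disc) is convex, and — in the cases with an elliptic generator — that the vertex angle at each elliptic fixed point is of the form $\pi/k$ (half a primitive rotation angle) so that the Poincar\'e Polygon Theorem applies. Since Theorem \ref{thm:allstop} already unpacks the geometry of all eleven configurations, the cleanest organization is to run through its list \eqref{item:HH}--\eqref{item:EE} and, for each, read off convexity directly from the description of which sides are present and how they meet.

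First I would set up the common framework: in every case the six (or fewer) sides are $Ax_A$, $L$, $Ax_B$, $L_B$, $Ax_{A^{-1}B}$, $L_A$ in cyclic order, with $L$ the common perpendicular to $Ax_A$ and $Ax_B$, and with $A = H_L H_{L_A}$, $B = H_L H_{L_B}$, $A^{-1}B = H_{L_A} H_{L_B}$. The key structural fact, which I would extract from the GM algorithm's stopping criteria, is that at a stopping configuration the half-turn lines $L$, $L_A$, $L_B$ ``bound a region'' in the sense of the Lemma in Section \ref{sec:first}: no one of the three separates the other two. Convexity of the hexagon then follows because each axis side is perpendicular to the two half-turn sides it meets (for $Ax_A$ and $Ax_B$ this is by construction of $L$, $L_A$, $L_B$; for $Ax_{A^{-1}B}$ it is because $A^{-1}B = H_{L_A}H_{L_B}$ is hyperbolic so its axis is the common perpendicular of $L_A$ and $L_B$), and a hexagon with right angles at the four vertices where an axis side meets a half-turn side, together with the region-bounding configuration of the three half-turn lines, cannot fail to be convex — I would make this precise with a short argument in $\HH^2$ about right-angled hexagons. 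When axis sides degenerate to points (parabolic: a boundary point where the two adjacent half-turn lines meet; elliptic: an interior point where they meet), the same perpendicularity statements become statements about angles at that point, and I would handle each degeneration in turn as enumerated in Theorem \ref{thm:allstop}.

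For the elliptic cases \eqref{item:HE}, \eqref{item:PE}, \eqref{item:EE}, the additional point to check is the vertex hypothesis of the Poincar\'e Polygon Theorem: at each vertex that is the fixed point of a primitive elliptic generator $E$ (with $E = H_L H_{L_B}$, say, or $E = H_{L_A} H_{L_B}$ in the ExExE subcase), the angle between the two half-turn lines meeting there is $\theta/2$ where $\theta$ is the rotation angle of $E$, and since we assume the elliptic generators are \emph{geometrically primitive} (hence $\theta = 2\pi/k$ for the order $k$), this vertex angle equals $\pi/k$, which divides $2\pi$ evenly — exactly the sub-multiple-of-$\pi$ condition needed for the side-pairing to satisfy the Poincar\'e hypotheses. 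I would also note the orientation bookkeeping: one must choose the orientations of the reflections so that the cycle condition at each elliptic vertex closes up correctly, which is the content of the parenthetical ``the transformations are oriented so that the vertex angle hypotheses ... apply'' in the Lemma.

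The main obstacle I expect is not any single case but the degenerate configurations in \eqref{item:PP}, \eqref{item:PE}, and \eqref{item:EE} where two half-turn lines $L_A$, $L_B$ meet on the boundary of the disc (so $A^{-1}B$ is parabolic) or two or three axis sides collapse simultaneously: there one must be careful that ``convex'' still makes sense for a triangle or quadrilateral with ideal vertices, that the hexagon hasn't degenerated so far that the region it bounds is empty, and — in the parabolic-$A^{-1}B$ subcases — that the cusp vertex satisfies the parabolic cycle condition of the Poincar\'e theorem rather than an elliptic one. I would dispatch these by invoking the Lemma of Section \ref{sec:first} directly, since it already covers the cases where half-turn pairs intersect on the boundary (giving freeness) and the elliptic-of-finite-order-with-primitive-angle or parabolic cases (giving discreteness), so the work reduces to checking that each of the eleven configurations really does present the half-turn lines in a region-bounding position with the stated angle at any elliptic vertex — a finite verification against the page references in \cite{GM} listed in the proof of the preceding theorem.
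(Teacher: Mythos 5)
Your proposal is correct and follows essentially the same route as the paper: the paper's proof is simply to trace the GM algorithm through to each of the eleven discrete stopping cases (via the page references in \cite{GM} listed for the preceding theorem) and read off the resulting degenerate hexagon in each case. Your additional detail --- convexity from the right angles where axis sides meet half-turn sides plus the region-bounding position of $L$, $L_A$, $L_B$, and the $\pi/k$ vertex-angle check at geometrically primitive elliptic fixed points --- fills in what the paper leaves implicit (and is really the content of the unnumbered theorem just before this one), but it is the same case-by-case verification, not a different argument.
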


\begin{proof}
Follow the GM algorithm through to each discrete stopping case.
\end{proof}
\begin{rem}{\rm
In the above lists, the stopping generators are given in the order found in the GM algorithm. Later we will see that for our purposes the order does not matter.}
\end{rem}
\begin{rem}{\rm
Thus in what follows we can modify the cyclic order of the stopping generators and consider, for example, HHP and HHE instead of HPH and HEH. That is, the discrete stopping configuration can be rotated, as needed.}
\end{rem}\begin{rem}{\rm
We note that in all of these cases the convex stopping hexagons lie below (that is, to the right of) $L$ if $L$ is oriented from the axis of $A$ towards the axis of $B$ and the smaller rotation angles of elliptics and parabolics are interior to the hexagon. This assumption allows to ignore consideration of traces of pull-back to $SL(2,\RR)$ or coherent orientation used in other papers.}
\end{rem}

We begin with square roots.
 \section{Adjoining Square Roots} \label{sec:SQroots}

 We consider adjoining $B^{1/2}$,  the square root of $B$, in the cases  above where $B$ is hyperbolic. The results depends upon the location of $\B2$ as it enters and exits the hexagon. There are essentially three possibilities, but since the conclusion includes the possibilities of the new group being either free or not free, the results of the theorem are stated using more cases. In  Figures \ref{fig:TryThree1} and \ref{fig:TryThree2} we show some possible locations for $\B2$. The hyperbolic law of sines is used to position some of the geodesics.
\begin{figure}\begin{center}
(i)  \includegraphics[height=2in]{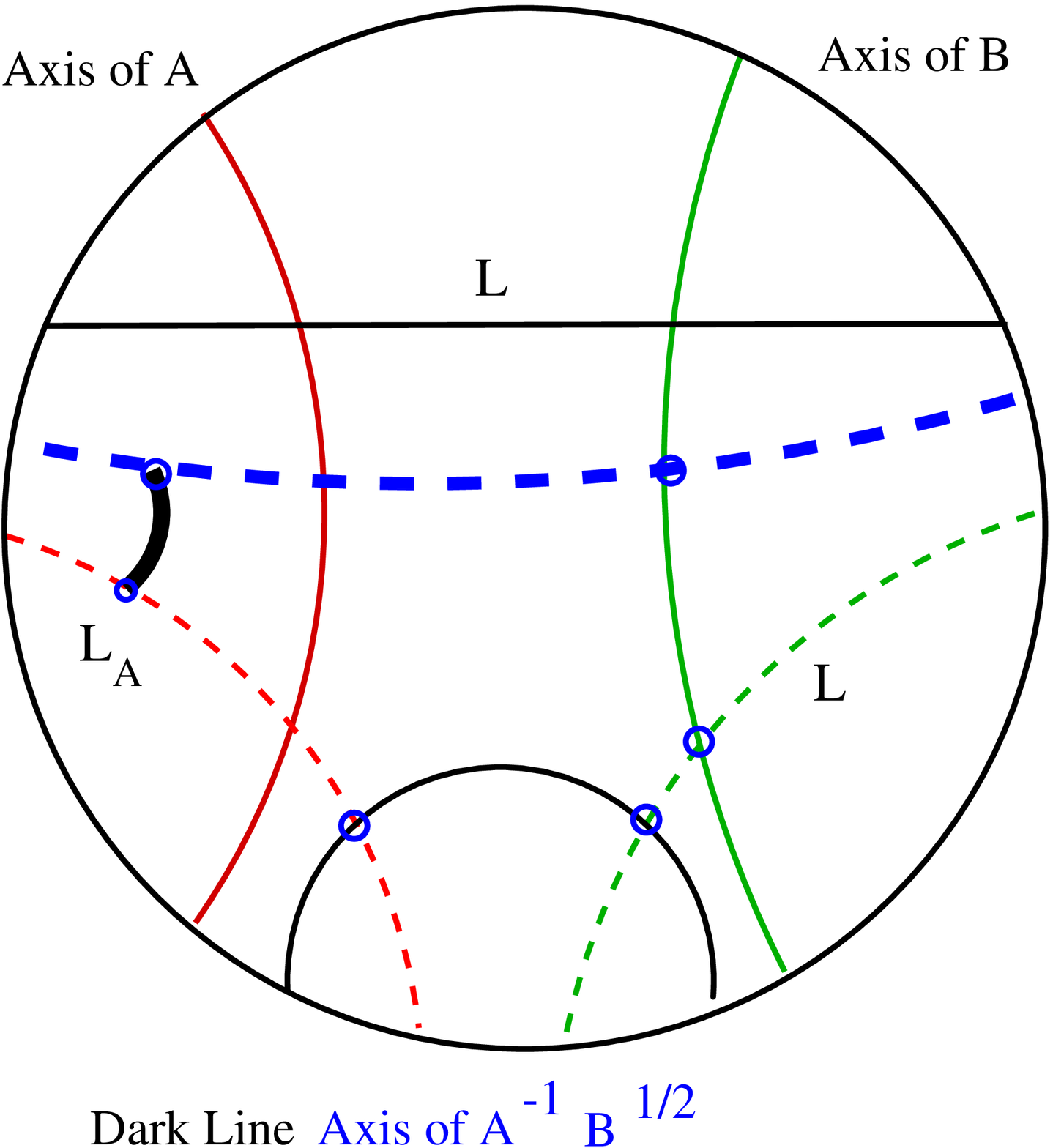}
(ii)  \includegraphics[height=2in]{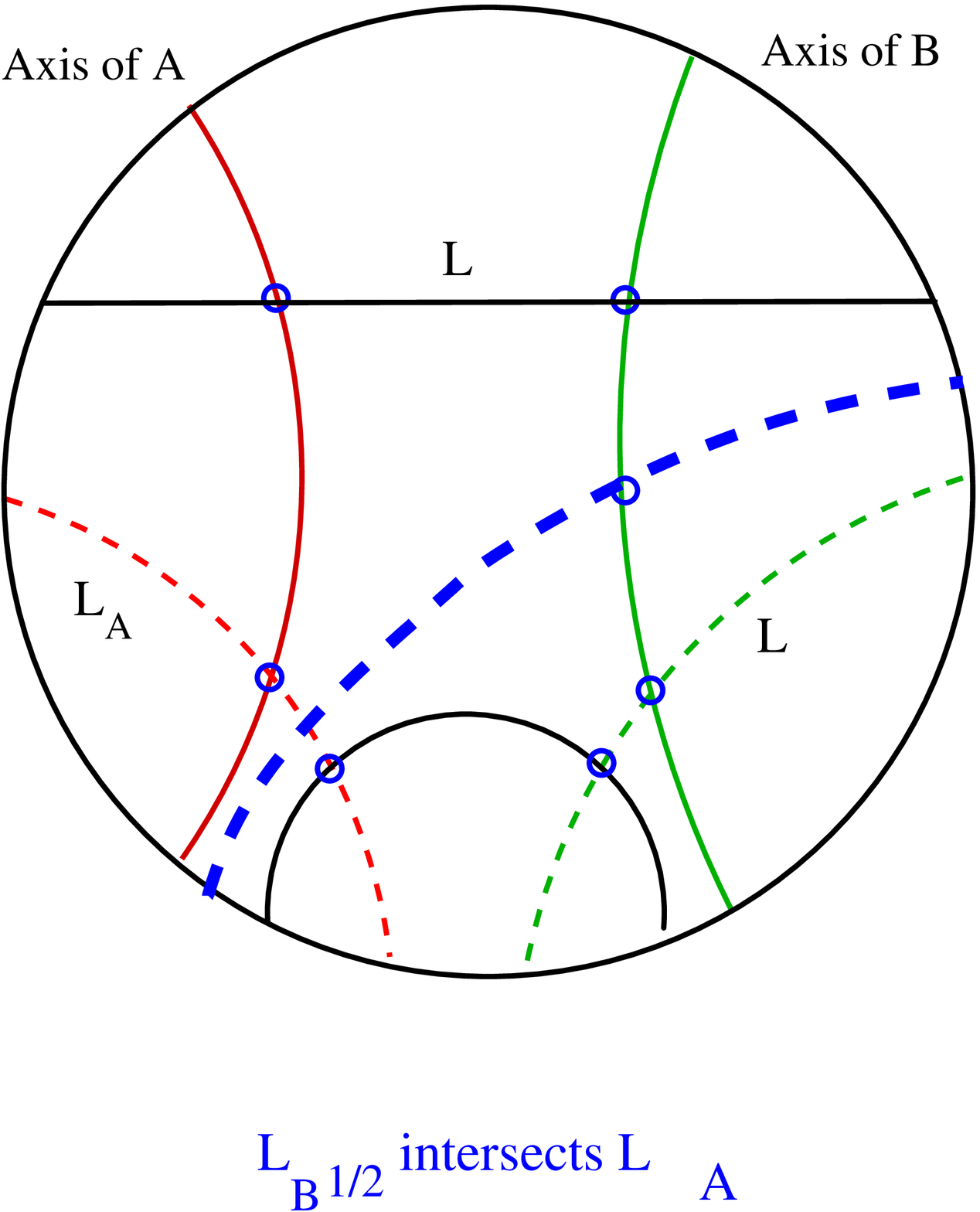}\end{center}
\caption{$A^{-1}B$ hyperbolic and $L_{B^{1/2}}$ intersects $Ax_A$, $L_A$ \label{fig:TryThree1}}
\end{figure}
\begin{figure}
\begin{center}
(iii)\includegraphics[height=2in]{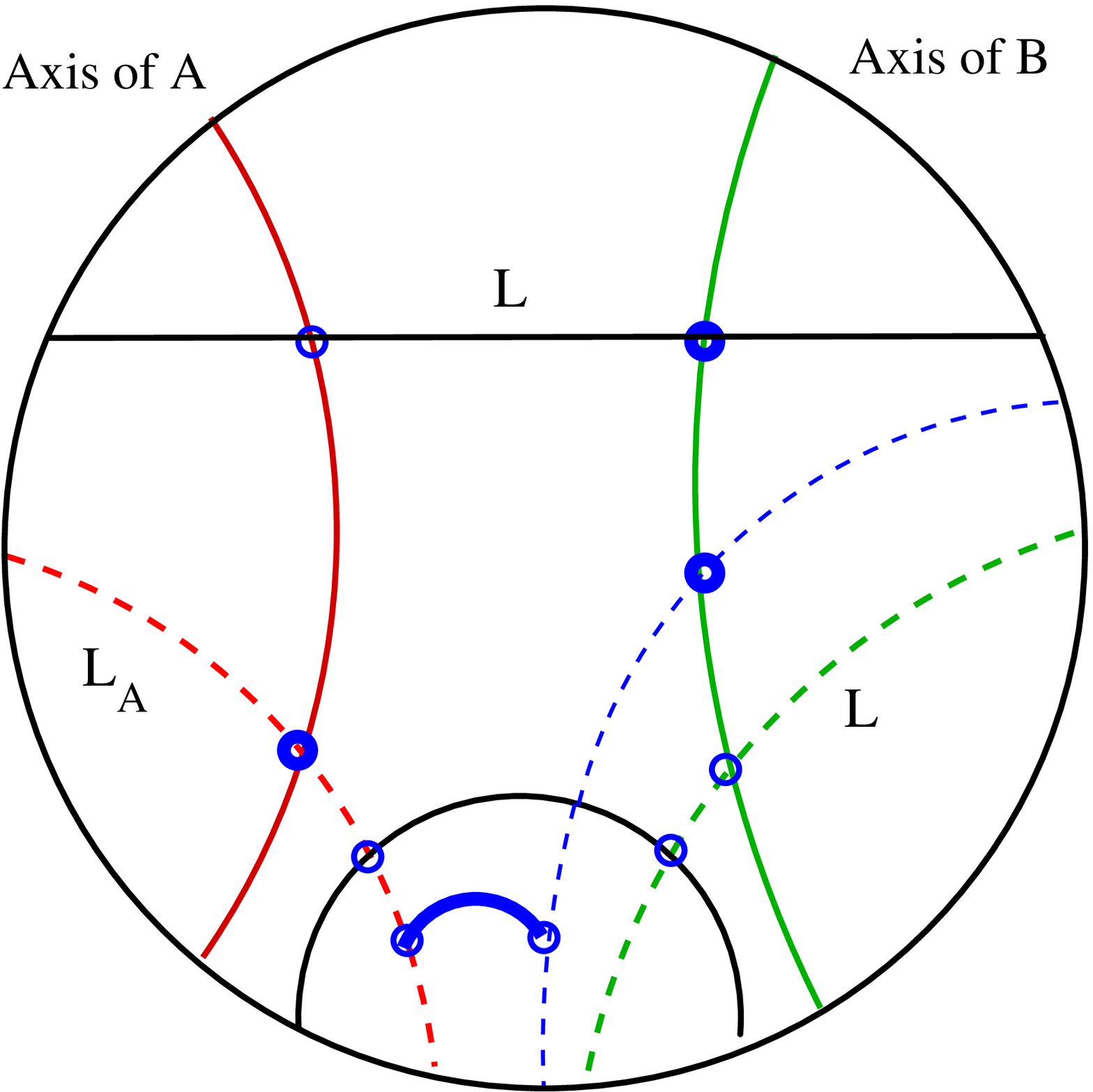}
(iv)\includegraphics[height=2in]{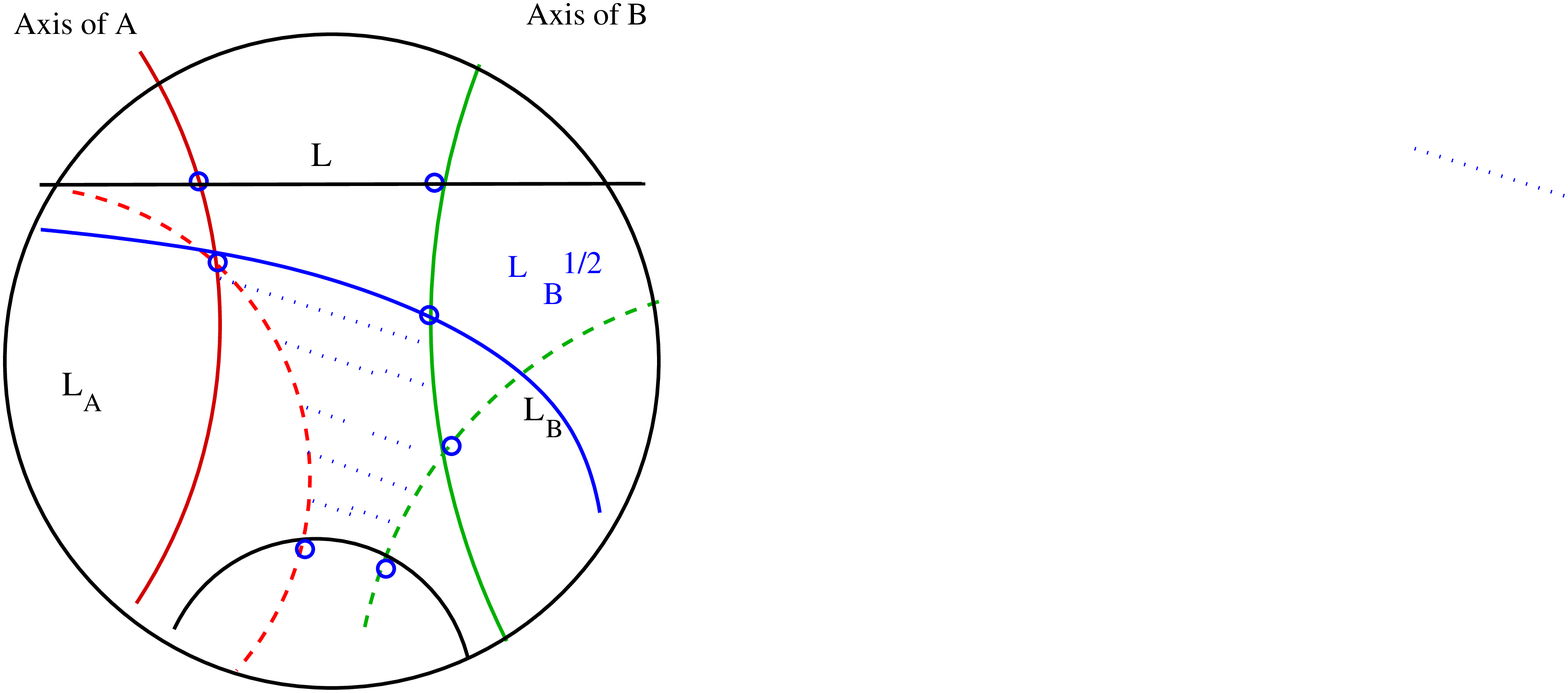}
\end{center}
\caption{$A^{-1}B$ hyperbolic and $L_{B^{1/2}}$  (iii) intersects $Ax_{A^{-1}B}$ in its interior or (iv) intersects $L_A$at a vertex \label{fig:TryThree2}}
\end{figure}

\begin{thm}\label{thm:neccsuffSQRT}
 Assume that  $(A,B)$ are a pair of hyperbolic discrete stopping generators for $G = \langle A, B \rangle$ with hexagon sides $Ax_A, L, Ax_B, L_B, Ax_{A^{-1}B}, L_A$.
 Let $\B2$ be chosen perpendicular to $Ax_B$ so that $B^{1/2} = H_L H_{\B2}$.
  \vskip .1in
  \noindent {\bf (H)} If  $A^{-1}B$ also hyperbolic,  then  $\hat{G}= \langle A, B^{1/2} \rangle$ is discrete and free  $\iff$ either

      (i) $L_{B^{1/2}} \cap Ax_A \ne \emptyset$ or
      (ii) $L_{B^{1/2}} \cap Ax_{A^{-1}B}  \ne \emptyset$

     \noindent  but  neither intersection is a vertex of the hexagon interior to $\HH^2$.
\vskip .1in
\noindent {\bf (P)} If $A^{-1}B$ is parabolic, then  $\hat{G}= \langle A, B^{1/2} \rangle$ is discrete and free $\iff$
 either

 (i) $L_{B^{1/2}} \cap Ax_A \ne \emptyset$ or
 (ii) $L_{B^{1/2}} \cap Ax_{A^{-1}B}  \ne \emptyset$

 \noindent but neither intersection is a vertex of the hexagon interior to $\HH^2$.
\vskip .1in
\noindent {\bf (E) }If $A^{-1}B$ is elliptic,it is primitive since it is a stopping generator. Then  $\hat{G}= \langle A, B^{1/2}  \rangle$ is discrete  $\iff$
 either

 (i) $L_{B^{1/2}} \cap Ax_A \ne \emptyset$ but the intersection is not at the vertex $Ax_A \cap L_A$

 or

 (ii) $L_{B^{1/2}} \cap L_A \ne \emptyset$ with $H_{L_A}H_{L_{B^{1/2}}}$ primitive elliptic


\vskip .15in
In all other cases, the group is not free and one applies the algorithm to the case where $A^{-1}B$ is elliptic to determine discreteness.
\end{thm}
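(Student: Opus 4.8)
The plan is to deduce Theorem~\ref{thm:neccsuffSQRT} from the First Result lemma of Section~\ref{sec:first}, applied to the group $\widehat{3G} = \langle H_L, H_{L_A}, H_{L_{B^{1/2}}}\rangle$, which contains $\hat G = \langle A, B^{1/2}\rangle$ as an index-two subgroup, so the two are simultaneously discrete (and $\hat G$ is free iff $\widehat{3G}$ is, modulo the order-two factors). Since $A = H_L H_{L_A}$ and $B^{1/2} = H_L H_{L_{B^{1/2}}}$ are fixed by the hypotheses, the only geodesic whose position varies is $L_{B^{1/2}}$, which by construction is perpendicular to $Ax_B$ and sits ``between'' $L_B$ and $Ax_B$ in the obvious sense (its half-turn realizes half the translation along $Ax_B$). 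First I would record, using the hyperbolic law of sines as indicated before the theorem statement, exactly where $L_{B^{1/2}}$ enters and exits the hexagon: it always crosses the side $L$, and its other endpoint lands either (a) on the side $Ax_A$, (b) in the interior of the side $Ax_{A^{-1}B}$, or (c) on the side $L_A$ — these are the three cases displayed in Figures~\ref{fig:TryThree1} and~\ref{fig:TryThree2}. The degenerate sub-cases are exactly when that endpoint is one of the two hexagon vertices interior to $\HH^2$, namely $Ax_A \cap L_A$ or $Ax_{A^{-1}B}\cap L_A$.

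The forward (``only if'') direction: I would show that if $L_{B^{1/2}}$ hits neither $Ax_A$, nor the interior of $Ax_{A^{-1}B}$, nor (in case (E)) $L_A$ appropriately, then one of the three half-turn geodesics $L$, $L_A$, $L_{B^{1/2}}$ separates the other two, so by the lemma the only way to have discreteness is ruled out — more precisely, in that situation $L_{B^{1/2}}$ would have to cross $L_A$ producing a product $H_{L_A}H_{L_{B^{1/2}}}$ that is either loxodromic-with-wrong-configuration or an elliptic of non-primitive angle, giving a non-discrete group (or a discrete but non-free one, which is precisely the ``all other cases'' clause pushed to the elliptic algorithm). The converse (``if'') direction is the main content: when $L_{B^{1/2}}$ meets $Ax_A$ (not at the interior vertex) the three geodesics $L$, $L_A$, $L_{B^{1/2}}$ bound a region, with all intersections either in the interior of a side or on the boundary circle, so the lemma gives discreteness and freeness directly; similarly when it meets the interior of $Ax_{A^{-1}B}$. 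In case (E), when $L_{B^{1/2}}$ meets $L_A$ with $H_{L_A}H_{L_{B^{1/2}}}$ primitive elliptic, I would verify the vertex-angle hypotheses of the Poincar\'e Polygon Theorem hold (the new vertex angle is a submultiple of $\pi$ because the original $A^{-1}B$ was primitive and $B^{1/2}$ only rotates the relevant side by a controlled amount), and again invoke the lemma.

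The step I expect to be the main obstacle is the bookkeeping in case (E) and in the boundary between ``discrete and free'' and ``discrete but not free'': one must check that when $L_{B^{1/2}}$ crosses $L_A$ the elliptic angle of $H_{L_A}H_{L_{B^{1/2}}}$ is genuinely primitive (not merely of finite order), and that when $L_{B^{1/2}}$ passes exactly through an interior hexagon vertex the resulting vertex cycle in the Poincar\'e Polygon Theorem fails the angle-sum condition, so discreteness genuinely breaks. Handling the ``all other cases'' remark requires showing that whenever none of the clean configurations holds, the group $\langle A, B^{1/2}\rangle$ is conjugate into — or already is — an instance to which the $A^{-1}B$-elliptic branch of the GM algorithm applies, which follows because $B^{1/2}$ shares the axis $Ax_B$ with $B$ and the pair $(A, B^{1/2})$ is again of one of the eleven types of Theorem~\ref{thm:allstop} after one step of the algorithm. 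I would organize the whole argument as a case analysis driven by Figures~\ref{fig:TryThree1}--\ref{fig:TryThree2}, with the hyperbolic law of sines supplying the quantitative condition for which side $L_{B^{1/2}}$ exits through.
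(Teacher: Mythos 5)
Your overall strategy is the paper's: use convexity to get the trichotomy of where $\B2$ exits the hexagon ($Ax_A$, $Ax_{A^{-1}B}$, or $L_A$, possibly at a vertex), and in the ``good'' cases apply the half-turn lemma of Section \ref{sec:first} to a triple of half-turn geodesics whose index-two orientation-preserving subgroup is $\hat{G}$. However, there is a concrete gap in your treatment of case (i), $\B2 \cap Ax_A \ne \emptyset$. You apply the lemma to the fixed triple $\{L, L_A, \B2\}$ in all cases, but in case (i) that triple does \emph{not} bound a region: $\B2$ is a chord of the convex hexagon running from the side $Ax_B$ to the side $Ax_A$, and in the cyclic order of sides $Ax_A, L, Ax_B, L_B, Ax_{A^{-1}B}, L_A$ this chord cuts the side $L$ off from the side $L_A$. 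Since $L$ and $\B2$ are disjoint (both perpendicular to $Ax_B$) and $L_A$ and $\B2$ are disjoint in this case, each of $L$ and $L_A$ lies entirely in one of the two half-planes determined by $\B2$ --- and they lie in opposite ones, so $\B2$ separates the other two geodesics and the lemma's hypothesis fails. The paper avoids this by changing the triple in case (i): it passes to the lower sub-hexagon with half-turn sides $L_A, L_B, \B2$ (and axis sides $Ax_{A^{-1}B^{1/2}}, Ax_{A^{-1}B}, Ax_B$), which do bound a region, and then identifies the associated orientation-preserving group $\langle A^{-1}B^{1/2}, B^{1/2}\rangle$ with $\hat{G}$ via $A^{-1}B^{1/2}\cdot(B^{1/2})^{-1}=A^{-1}$. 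Your triple is the correct one only for case (ii), where the paper likewise uses the upper region bounded by $L, \B2, L_A$.

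Two smaller points. First, your ``only if'' direction invokes the lemma as if it were a biconditional (``one geodesic separates the other two, so by the lemma discreteness is ruled out''); the lemma is only a sufficient condition, and the actual argument is that the trichotomy is exhaustive, so if (i) and (ii) both fail then $\B2$ meets $L_A$ and $H_{L_A}H_{\B2}=A^{-1}B^{1/2}$ is elliptic (or parabolic at a boundary intersection), never ``loxodromic-with-wrong-configuration''; hence $\hat{G}$ is not free and is non-discrete unless the rotation is primitive. Second, the identification of the degenerate vertex cases and the case (E) bookkeeping are in the right spirit and consistent with the paper, which in fact treats the parabolic and elliptic subcases only by saying they are ``similar.''
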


\begin{proof}
 We consider the $HHH$ case and work first with the ordered hyperbolic generators $A,B,A^{-1}B$

 Since the hexagon is convex and $\B2$ is perpendicular to the Axis of $B$ intersecting it along the interior side of  the axis, it must also either intersects $Ax_A$, $L_A$ or $Ax_{A^{-1}B}$. If it intersects $L_A$, then $H_{L_A} H_{\B2}$ is elliptic so $\hat{G}$ is not free except and not discrete except possibly when the elliptic is  primitive.  If $\B2$ it intersects $Ax_A$ but not at a vertex of the hexagon, then the lower region, the region of the hexagon below $\B2$ is part of the hexagon for
  $G_2 = \langle A^{-1}B^{1/2}, B^{1/2} \rangle$. This is the hexagon with sides $Ax_{A^{-1}B^{1/2}}, L_A, Ax_{A^{-1}B}, L_B, Ax_B, \B2 $. Thus $G_2$ is discrete and free because the half-turn lines bound a region.
If $\B2$ intersects $Ax_{A^{-1}B}$, then the region of the hexagon above $\B2$ is a hexagon with half-turn lines $L, \B2, \mbox{ and } L_A$ with axis sides along $Ax_A$, $Ax_{A^{-1}B^{1/2}}, Ax_B$. Thus the group $G_2= \langle A, B^{1/2} \rangle$ is discrete and free. Of course, $G_2$ is the same group as $\hat{G}$.

The analysis of the cases for $A^{-1}B$ elliptic or parabolic are similar and thus omitted after noting that in the case that $A^{-1}B$ is primitive elliptic. \end{proof}

\section{Adjoining $n$th Roots} \label{sec:nth}

The ideas used in adjoining $n$th roots are the same as in adjoining square roots except that there are more cases to consider depending upon where $L_B^{1/n}$ intersects the hexagon and which of its powers intersect an interior side of the hexagon and which interior side, which intersect vertices and which do not intersect vertices. For ease of exposition we refer to a vertex as a side since it is a degenerate side.  Write $v_{A\cap L_A}$  and $v_{L_A\cap A^{-1}B}$ for the vertices.

For any stopping configuration we have a hexagon corresponding to transformations $A,B$ and $A^{-1}B$ and half-turn sides $L, L_A,L_B$. 
    We consider $n$th roots of $B$. The segment of $L_{B^{1/n}}$ that passes through the hexagon can exit along $L_A$, $Ax_A$ or $Ax_{A^{-1}B}$ (It cannot cross $L$ or $Ax_X$ or $L_B$. )  If it crosses $L_A$ then the group has an elliptic element and one goes to the elliptic case where $A^{-1}B$ is elliptic and then applies the algorithm appropriately.

While we know that $\Bn$ intersects $Ax_A$ in its interior, there are five choices for where each of the other  $L$-lines described below exits the hexagon: interior to $Ax_A$, interior to $L_A$, interior to $Ax_{A^{-1}B}$, at $v_{A \cap L_A}$ or at $v_{L_a \cap Ax_{A^{-1}B}}$.  The cases we need to consider  involve two integers $r$ and $s$ with $1 < s < r <n$. We assume that going counter clockwise from $L_{\Bn}$, one encounters next $L_{B^{(1/n)}s}$
and then $L_{B^{(1/n)}r}$.
We term these integers {\sl splitting integers} if they determine a jump in the side of the hexagon that  these $L-$lines  intersect. That is, if $L_{B^{(1/n)s}}$  and $L_{B^{(1/n)r}}$ do not intersect the same side.

Considering all of the possibilities gives:


 \begin{thm} \label{thm:nthroots}
 If $A,B$ are hyperbolic discrete  stopping generators
  with $A^{-1}B$ hyperbolic, parabolic or elliptic, consider the  stopping configuration  along with $\Bn $ and $L_{B^{r/n}}$ for $r$ an integer with $1 \le  r \le n$. Let $s$ be an integer with $1 \le s \le r$ so that $r$ and $s$ are splitting integers.
\vskip .1in
 {\centerline{\bf CASE I: $L_{B^{r/n}}$ does not intersect any vertex of the hexagon.}}
\vskip .15in
\noindent  {\bf H} Assume that $A^{-1}B$ also hyperbolic.
 Then  $\hat{G} = \langle A, B^{1/n} \rangle $
 is discrete and free $\iff$ either \begin{enumerate}

 \item $L_{B^{1/n}} \cap Ax_{A^{-1}B} \ne \emptyset$ or

  \item $L_{B^{(s)/n}} \cap Ax_A  \ne \emptyset$ and $L_{B^{(s+1)/n}} \cap Ax_{A^{-1}B } \ne \emptyset$, for some integer $s \le r$
      \end{enumerate}

 \noindent  If $L_{B^{s/n}} \cap L_A \ne \emptyset $ for some integer $1 \le s \le n$, then  $\langle A, B^{1/n} \rangle $ is not free, it may be discrete if $A^{-1}B^{s/n}$ is primitive, otherwise  one must go to an appropriate elliptic case of the algorithm to determine discreteness.

\vskip .15in
\noindent {\bf P} Assume that  $A^{-1}B$ is parabolic so that its axis is a boundary vertex. Then   $\hat{G} = \langle A, B^{1/n} \rangle $  is discrete and free $\iff$
   $$L_{B^{(s)/n}} \cap Ax_A  \ne \emptyset\;\;  \forall s, 1 \le s \le r.$$
If $L_{B^{s/n}} \cap L_A \ne \emptyset $ for some integer $1 \le s \le n$ but the intersection is not at the point $Ax_{A^{-1}B}$, then  $\hat{G} = \langle A, B^{1/n} \rangle $
is not free. It is discrete if $A^{-1}B^{s/n}$ is primitive. Otherwise  one must go to an appropriate elliptic case of the algorithm to determine discreteness.

\vskip .1in
{\bf E} Assume $A^{-1}B$ is elliptic so that its axis is an interior point.
Then   $\hat{G} = \langle A, B^{1/n} \rangle $
 is discrete  if
  $L_{B^{(s)/n}} \cap Ax_A  = \emptyset \;\; \forall s$ and  $L_{B^{s/n}} \cap Ax_{A^{-1}B} = \emptyset$

   or if $L_{B^{s/n}} \cap Ax_{A^{-1}B} = Ax_A \cap L_A$ for some $s$ and
    $A^{-1}B^{1/n}$ is primitive elliptic.

   For  all other cases, the elliptic case of algorithm
   must be applied to determine discreteness.

   \vskip .15in
{\centerline{\bf CASE II:  $L_{B^{r/n}}$ intersects a vertex, either $v_{L_A \cap Ax_A}$ or $v_{L_A \cap Ax_{A^{-1}B}}$}}

\vskip .03in
If either of these intersections are on the boundary of $\HH^2$, the group is discrete and free. Intersections at interior vertices will give elliptic elements and the group will be discrete if the rotation of the elliptic is primitive. Otherwise apply the GM algorithm for elliptic elements.
\end{thm}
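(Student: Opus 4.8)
\textbf{Proof plan for Theorem \ref{thm:nthroots}.}

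The plan is to reduce the $n$th‑root statement to the square‑root analysis of Theorem \ref{thm:neccsuffSQRT} applied to a chain of intermediate configurations, using the hexagon $3G = \langle H_L, H_{L_A}, H_{L_B}\rangle$ and the Poincaré Polygon Theorem (Lemma of Section \ref{sec:first}) as the only discreteness engine. First I would fix the convex stopping hexagon with half‑turn sides $L, L_A, L_B$ and axis sides $Ax_A, Ax_B, Ax_{A^{-1}B}$, and record that, because $\Bn$ is perpendicular to $Ax_B$ and meets it on the interior side, the segment of $\Bn$ inside the hexagon must exit along exactly one of $L_A$, $Ax_A$, or $Ax_{A^{-1}B}$ (it cannot cross $L$, $L_B$, or any $Ax$‑side transversally, since those would force a forbidden crossing of the convex boundary). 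The same is true of each $L_{B^{r/n}}$ for $1\le r\le n$, and as $r$ increases these lines sweep monotonically (counterclockwise) across the hexagon from near $\Bn$ toward $L_B$; this monotonicity is the combinatorial backbone of the whole argument and is where the "splitting integers" $s<r$ enter: there is at most one jump from the "$Ax_A$ side" to the "$Ax_{A^{-1}B}$ side," and if any $L_{B^{k/n}}$ meets $L_A$ the line $H_{L_A}H_{L_{B^{k/n}}}$ is elliptic and we are thrown into the elliptic branch of the GM algorithm.

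Next, for the free/discrete cases I would build the new hexagon explicitly. If every $L_{B^{s/n}}$, $1\le s\le r$, meets $Ax_A$ (the parabolic case P, and one alternative of H), then the portion of the hexagon above $\Bn$ is itself a hexagon for $\langle A, B^{1/n}\rangle$ with half‑turn sides $L, \Bn, L_A$ and three axis sides along $Ax_A$, $Ax_{A^{-1}B^{1/n}}$, $Ax_B$; since these three half‑turn geodesics bound a region, the Lemma gives discreteness and freeness. If instead $L_{B^{1/n}}\cap Ax_{A^{-1}B}\ne\emptyset$ (the first alternative of H), the region below $\Bn$ is the hexagon for $\langle A^{-1}B^{1/n}, B^{1/n}\rangle = \langle A, B^{1/n}\rangle$ with sides $Ax_{A^{-1}B^{1/n}}, L_A, Ax_{A^{-1}B}, L_B, Ax_B, \Bn$, again a region bounded by half‑turns. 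The mixed case — $L_{B^{s/n}}$ meets $Ax_A$ but $L_{B^{(s+1)/n}}$ meets $Ax_{A^{-1}B}$ — is handled by slicing along the intermediate line $L_{B^{s/n}}$: above it one gets a hexagon for $\langle A^{-1}B^{s/n}, B^{(n-s)/n}\cdot\text{(rename)}\rangle$ and below it the complementary hexagon, and the union of the two side‑pairings is exactly a fundamental domain for $\langle A, B^{1/n}\rangle$, so Poincaré applies to the combined polygon. For the elliptic branch E, I would invoke Theorem \ref{thm:neccsuffSQRT}(E): discreteness holds precisely when either no $L_{B^{s/n}}$ meets $Ax_A$ or $Ax_{A^{-1}B}$ transversally, or the only incidence is at the vertex $Ax_A\cap L_A$ with $A^{-1}B^{1/n}$ a primitive rotation (so the angle condition of the Poincaré Polygon Theorem is met); otherwise an elliptic of the wrong order appears and one must re‑run the GM algorithm from its elliptic case.

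For the necessity direction I would argue contrapositively: if none of the listed alternatives holds, then some $L_{B^{s/n}}$ exits at an interior vertex or crosses $L_A$ with a non‑primitive product, and in either situation $\langle A, B^{1/n}\rangle$ contains a small‑angle elliptic or a properly nested configuration violating the vertex hypotheses, hence is either non‑discrete or reduces (via the GM algorithm) to a case already known to fail freeness. Case II is then the boundary phenomenon: if $L_{B^{r/n}}$ lands on a vertex on $\partial\HH^2$, the offending elliptic degenerates to a parabolic or identity pairing and the polygon still has a valid cusped vertex, so Poincaré gives discreteness and freeness; if it lands on an interior vertex, we get an elliptic whose order must be checked — primitive gives discreteness by the angle‑$\theta/2$ clause of the Lemma, non‑primitive sends us back to the elliptic algorithm. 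The main obstacle I anticipate is the bookkeeping for the mixed/splitting case: verifying that the two sub‑hexagons obtained by cutting along $L_{B^{s/n}}$ actually glue to a genuine fundamental polygon for $\langle A, B^{1/n}\rangle$ (correct side identifications, correct vertex cycles summing to $2\pi$ or to a submultiple in the elliptic subcase) rather than merely for a larger index‑raising group, and handling the degenerate sub‑hexagons where an axis side collapses to a point; this is exactly the point where one must be careful that "splitting integers" are unique and that no third jump can occur, which follows from convexity plus the monotone sweep of the $L_{B^{r/n}}$.
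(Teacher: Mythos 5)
Your overall strategy is the paper's own: the paper supplies no separate proof of Theorem \ref{thm:nthroots} beyond the preamble remark that the ideas are the same as for square roots together with the enumeration of exit sides and splitting integers, so your plan --- convexity forces each $L_{B^{s/n}}$ to exit through $Ax_A$, $L_A$ or $Ax_{A^{-1}B}$; the exit side changes monotonically with $s$; crossing $L_A$ produces an elliptic and sends one to the elliptic branch; and the free cases are certified by exhibiting a region bounded by three half-turn geodesics and invoking the Lemma of Section \ref{sec:first} --- is a faithful, and in fact more explicit, elaboration of the intended argument.

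There is, however, one concrete slip in your identification of the new fundamental hexagons, and it is exactly the point the paper's proof of Theorem \ref{thm:neccsuffSQRT} is careful about. When $L_{B^{1/n}}$ exits through $Ax_A$, the three geodesics $L$, $L_{B^{1/n}}$, $L_A$ do \emph{not} bound a region: $Ax_A$ is the common perpendicular of $L$ and $L_A$, and since $L_{B^{1/n}}$ crosses the segment of $Ax_A$ between its feet on $L$ and on $L_A$ while being disjoint from both, it separates $L$ from $L_A$. So the portion of the hexagon above $L_{B^{1/n}}$ is only a quadrilateral with half-turn sides $L$ and $L_{B^{1/n}}$, not a hexagon with half-turn sides $L$, $L_{B^{1/n}}$, $L_A$. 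The paper handles this by a Nielsen move: when $L_{B^{1/2}}$ meets $Ax_A$ one passes to the pair $(A^{-1}B^{1/2}, B^{1/2})$ and uses the \emph{lower} region, with half-turn sides $L_A$, $L_{B^{1/2}}$, $L_B$; the triple $L$, $L_{B^{1/2}}$, $L_A$ bounding the upper region is the certificate for the \emph{other} case, $L_{B^{1/2}} \cap Ax_{A^{-1}B}\ne\emptyset$. You have these two sub-cases interchanged. For $n$th roots the same correction propagates: when the first several $L_{B^{s/n}}$ exit through $Ax_A$ and the later ones through $Ax_{A^{-1}B}$, one must iterate the Nielsen move (this is precisely what the splitting integers record) until one reaches a triple of half-turn geodesics that genuinely bounds a region; your slice-and-glue construction will work once each region is attached to the correct Nielsen-transformed generating pair. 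Everything else --- the elliptic branch, the vertex dichotomy in Case II, and the reduction of the parabolic case --- matches the paper.
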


It follows immediately that
\begin{thm}\label{thm:ratls/n}
If $s/n$ is a rational number with $s/n > 1$ and $s = wn +r$, we note that $\langle A, B^w \rangle$ is discrete whenever $\langle A, B \rangle$ is and we can apply the Theorem \ref{thm:nthroots} then to $(A,Y^{r/n})$ where $Y= B^w$.
\end{thm}


\subsection{$B$ parabolic or elliptic} \label{sec:PE}
In the case that the stopping generator $B$ is parabolic or elliptic,   $\Bn$ will have a segment that begins at $Ax_B$ which in this case is a point and passes through the interior of the stopping hexagon and the options for exiting the hexagon are unchanged. Thus we can conclude

\begin{thm} \label{thm:BPE}
If $B$ is parabolic, the conclusions of Theorem \ref{thm:nthroots} still apply, as do that those of Theorem \ref{thm:ratls/n}.

If $B$ is primitive elliptic, the conclusions of Theorem \ref{thm:nthroots} with $\hat{G}$ discrete, but not free.
The conclusions of Theorem \ref{thm:ratls/n} also apply again with $\hat{G}$ discrete, but not free.

\end{thm}

\section{General Formulation Theorems} \label{sec:allmainTHMS}

In this section we state the results above in greater generality. Assume that $(X,Y)$ are discrete stopping generators. This means that the hexagon is convex and that the angles at the any elliptic vertices are half of a primitive  elliptic angle  and the direction of rotation of parabolics and elliptics is interior to the hexagon.

The hexagon sides are $Ax_X, Ax_Y$ and $AX_{X^{-1}Y}$ and the half-turn geodesic sides as $L, L_X, \; \mbox{and} \; L_Y$. All of the half-turn sides are subintervals of proper geodesics. The axis sides may be single points.
In the case of an elliptic element its axis is a point interior to $\HH^2$ and in the case of a parabolic element its axis is a point on the boundary of  $\HH^2$.

\begin{thm} \label{thm:main} Assume that $(X,Y)$ are discrete stopping generators so that the hexagon with sides $Ax_X,L,Ax_Y, L_Y, Ax_{X^{-1}Y}, L_X$ is a convex stopping hexagon. Let $\hat{G} = \langle X^{1/n}, Y \rangle$ where $n$ is a positive integer and $X$ is any type of transformation, H, E or P.
Let $L_{X^{1/n}}$ be the geodesic with $X^{1/n} = L \circ L_{X^{1/n}}$.

\vskip .03in
There are three possibilities:
\noindent{\begin{enumerate}
\item \label{item:i} $\Xn \cap Ax_Y \ne \emptyset$
\item \label{item:ii} $\Xn \cap Ax_{X^{-1}Y} \ne \emptyset$
\item \label{item:iii}$\Xn \cap L_Y \ne \emptyset$.
\end{enumerate}}

We have
\begin{description}

\item[No vertex intersections] Assume that none of these intersections are at vertices of the hexagon, then

$$\tilde{G} = \langle X^{1/n}, Y \rangle$$ is discrete $\iff$

item \ref{item:i} or \ref{item:ii} occurs or

item \ref{item:iii} occurs with $H_{X_n}H_{L_Y}$ a primitive rotation.

\item[Vertex Intersections]

\vskip .03in

An intersection that occurs at a vertex will  be either at $Ax_Y \cap L_Y$ or $Ax_{X^{-1}Y} \cap L_Y$.

\vskip .03in

\noindent The group $\tilde{G}$ is discrete

if the vertex is interior and the rotation there is primitive.

\vskip .03in

\noindent The group $\tilde{G}$ is free

in cases \ref{item:i} or \ref{item:ii} or

if \ref{item:iii} occurs with the intersection point on the boundary of $\HH^2$.

\end{description}

\end{thm}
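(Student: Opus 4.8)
The plan is to reduce the statement, as in the proof of Theorem \ref{thm:neccsuffSQRT}, to the three-generator half-turn group and the Poincar\'e criterion recorded in Section \ref{sec:first}. Since $X^{1/n}=H_L H_{L_{X^{1/n}}}$ and $Y=H_L H_{L_Y}$, the group $\tilde G=\langle X^{1/n},Y\rangle$ is the index-two ``even'' subgroup of $\Gamma:=\langle H_L,H_{L_{X^{1/n}}},H_{L_Y}\rangle$; hence $\tilde G$ is discrete exactly when $\Gamma$ is, and the Lemma of Section \ref{sec:first} reads off $\Gamma$ from the mutual position of the geodesics $L$, $L_{X^{1/n}}$, $L_Y$: if they bound a region then $\Gamma$ is discrete and $\tilde G$ is free; if two of them meet in $\HH^2$ then $\Gamma$ is discrete precisely when the Poincar\'e vertex hypothesis holds; and if two meet only on $\partial\HH^2$ the product of that pair is parabolic and $\tilde G$ is again free. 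So the whole proof consists of locating $L_{X^{1/n}}$ against the stopping hexagon.

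For that, write $X=H_LH_{L_X}$. The geodesics $L$ and $L_X$ are perpendicular to $Ax_X$ (or, when $X$ is parabolic or elliptic, pass through the point $Ax_X$), and so is $L_{X^{1/n}}$; a half-translation-length computation --- a rotation-angle computation when $X$ is elliptic --- shows that $L_{X^{1/n}}$ meets the side $Ax_X$ at the point lying a fraction $1/n$ of the way from the vertex $Ax_X\cap L$ toward the vertex $Ax_X\cap L_X$. Consequently $L_{X^{1/n}}$ is disjoint from both $L$ and $L_X$, it enters the interior of the convex stopping hexagon through the side $Ax_X$, and it must leave through exactly one of the three remaining sides: $Ax_Y$ (alternative \ref{item:i}), $Ax_{X^{-1}Y}$ (alternative \ref{item:ii}), or $L_Y$ (alternative \ref{item:iii}). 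This is the trichotomy, and it shows the three alternatives are mutually exclusive.

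Suppose first that the exit is not at a vertex. If $L_{X^{1/n}}$ leaves through $Ax_{X^{-1}Y}$, the chord $L_{X^{1/n}}$ cuts the hexagon into two convex pieces, and the piece carrying the side $L$ is a convex polygon three of whose sides are $L$, $L_{X^{1/n}}$, $L_Y$. Since the supporting geodesic of a side of a convex polygon has the whole polygon on one side of it, no one of these three geodesics separates the other two; they bound a region, so by the Lemma $\Gamma$ is discrete and $\tilde G$ is free. When $L_{X^{1/n}}$ leaves through $Ax_Y$ the same cut puts $L_Y$ on the other piece; here one passes to a suitable power, exactly as in the proof of Theorem \ref{thm:neccsuffSQRT} the group $\langle A,B^{1/2}\rangle$ was re-exhibited as $\langle A^{-1}B^{1/2},B^{1/2}\rangle$. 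The feet of $L_{X^{1/n}},L_{X^{2/n}},\dots$ march monotonically along the side $Ax_X$, so by the splitting-integer analysis of Theorem \ref{thm:nthroots} there is an index $s$ with $L_{X^{(s+1)/n}}$ leaving through $Ax_{X^{-1}Y}$; cutting along $L_{X^{(s+1)/n}}$ then produces a convex polygon with half-turn sides $L$, $L_{X^{(s+1)/n}}$, $L_Y$, handled as before, and one checks that the even part of the corresponding half-turn group is still $\langle X^{1/n},Y\rangle$. If instead $L_{X^{1/n}}$ crosses $L_Y$, then $H_{L_{X^{1/n}}}H_{L_Y}=(X^{1/n})^{-1}Y$ is elliptic (interior crossing) or parabolic (crossing on $\partial\HH^2$) and never hyperbolic; because the stopping hexagon is normalized with the smaller rotation angle inside it, the Poincar\'e vertex hypothesis holds if and only if this elliptic is a primitive rotation, which gives discreteness in that case and in the parabolic case, whereas a non-primitive or irrational rotation forces the hexagon-copies to overlap and $\tilde G$ is not discrete. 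Together with the trichotomy this is exactly the asserted equivalence.

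Vertex exits are disposed of quickly: a vertex on $L_{X^{1/n}}$ must be $Ax_Y\cap L_Y$ or $Ax_{X^{-1}Y}\cap L_Y$; if it is ideal the three geodesics meet only on $\partial\HH^2$, so $\tilde G$ is discrete and free, and if it is interior one gets an elliptic and $\tilde G$ is discrete precisely when that rotation is primitive, while in the non-vertex cases \ref{item:i} and \ref{item:ii} the analysis above already produced a genuine region, hence freeness. Finally, when $X$ itself is parabolic or elliptic the only change is that $Ax_X$ is a single point through which $L$, $L_X$, $L_{X^{1/n}}$ all pass and $X^{1/n}$ is a geometrically primitive elliptic (it realizes the minimal rotation of its cyclic group because $X$ does), so the vertex hypothesis at $Ax_X$ is automatic and every step above goes through verbatim, with the word ``free'' deleted from the conclusion when $X$ is elliptic. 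The step carrying the real content, and the one I expect to be the main obstacle, is the analysis of alternative \ref{item:i}: tracking how the geodesics $L_{X^{s/n}}$ sweep across the stopping hexagon, pinning down the splitting index, and verifying that the sub-hexagon it yields recaptures all of $\langle X^{1/n},Y\rangle$ rather than a proper subgroup.
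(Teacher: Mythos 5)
Your proposal is correct and follows essentially the same route as the paper: the paper's own proof of Theorem \ref{thm:main} simply cites Theorems \ref{thm:neccsuffSQRT}, \ref{thm:nthroots}, \ref{thm:ratls/n} and \ref{thm:BPE} with the generators cyclically permuted, and what you have written is precisely the content of those proofs (the half-turn decomposition into $\langle H_L, H_{L_{X^{1/n}}}, H_{L_Y}\rangle$, the cut of the convex stopping hexagon along $L_{X^{1/n}}$, and the Poincar\'e polygon criterion of Section \ref{sec:first}) carried out directly. The step you flag as the main obstacle --- the splitting-integer analysis in alternative \ref{item:i} --- is exactly the content of Theorem \ref{thm:nthroots}, which is where the paper also places that burden.
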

\begin{proof}
This follows directly from applying Theorems \ref{thm:neccsuffSQRT},
 \ref{thm:nthroots}  and \ref{thm:ratls/n}, and  \ref{thm:BPE} but  allowing the order of the generators to be cyclically permuted.
\end{proof}
We have
\begin{thm}\label{thm:ratlpowersX}

 If $X,Y$ are hyperbolic discrete  stopping generators
  with $X^{-1}Y$ hyperbolic, parabolic or elliptic, consider the  stopping configuration  along with $\Xn $ and $L_{X^{r/n}}$ for $r$ an integer with $1 \le  r \le n$. Let $s$ be an integer with $1 \le s \le r$.
\vskip .05in
\begin{description}
\item[Case I]  Assume first that $L_{X^{r/n}}$ does not intersect any vertex of the hexagon.
\vskip .15in
\begin{description}
\item[IH] Assume that $X^{-1}Y$ also hyperbolic.
\vskip .02in
 Then  $\hat{G} = \langle X^{1/n}, Y \rangle $
 is discrete and free $\iff$ either \begin{enumerate}
 \item $L_{X^{1/n}} \cap Ax_{X^{-1}Y} \ne \emptyset$ or

  \item $L_{X^{(s)/n}} \cap Ax_Y  \ne \emptyset$ and $L_{X^{(s+1)/n}} \cap Ax_{X^{-1}Y } \ne \emptyset$, for some integer $s \le r$.
      \end{enumerate}

\vskip .02in

   If $L_{X^{s/n}} \cap L_Y \ne \emptyset $ for some integer $1 \le s \le n$,

      then  $\langle X^{-1}, Y \rangle $ is not free,

      it may be discrete if $X^{s/n}Y^{-1}$ is primitive,

      otherwise  one must go to an appropriate elliptic case of the algorithm to determine discreteness.

\vskip .15in
\item[IP] Assume that  $X^{-1}Y$ is parabolic so that its axis is a boundary vertex.

Then   $\hat{G} = \langle X^{1/n}, Y \rangle $  is discrete and free $\iff$
   $$L_{X^{(s)/n}} \cap Ax_Y  \ne \emptyset\;\;  \forall s, 1 \le s \le r.$$

\vskip .031in
If $L_{X^{s/n}} \cap L_Y \ne \emptyset $ for some integer $1 \le s \le n$ but the intersection is not at the point $Ax_{X^{-1}Y}$,

then  $\hat{G} = \langle X^{1/n}, Y \rangle $
          is not free.

          It is discrete if $X^{s/n}Y^{-1}$ is primitive.

          Otherwise  one must go to an appropriate elliptic case of the algorithm to determine discreteness.


\vskip .1in
\item[IE] Assume $X^{-1}Y$ is elliptic so that its axis is an interior point.

Then   $\hat{G} = \langle X^{1/n}, Y \rangle $
 is discrete  if

  $L_{X^{(s)/n}} \cap Ax_Y  \ne  \emptyset \;\; \forall s$ but $L_{X^{s/n}} \cap L_Y =  \emptyset$
   unless the intersection is at $Ax_Y \cap L_Y$ and
    $H_{L_{X^{s/n}}}H_{L_Y}$ is primitive elliptic.
\vskip .02in

    For  all other cases, the elliptic case of algorithm
   must be applied to determine discreteness.

   \vskip .05in 
   \end{description}
\item[II] If $L_{X^{r/n}}$ intersects a vertex, it must be at $L_Y \cap Ax_Y$ or $L_Y \cap Ax_{X^{-1}Y}$. If either of these intersections are on the boundary of $\HH^2$, the group is discrete and free. Intersections at interior vertices will give elliptic elements and the group will be discrete if the rotation of the elliptic is primitive. Otherwise apply the elliptic cases of the algorithm.
\vskip .05in
\item[Case III]  If $X$ is parabolic, the conclusion of \ref{thm:main} still apply.
\vskip .05in
\item[Case IV] If $X$ is primitive elliptic, the conclusion of \ref{thm:main} with $\hat{G}$ discrete but not free.
    \end{description}
\end{thm}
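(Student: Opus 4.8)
The plan is to deduce Theorem~\ref{thm:ratlpowersX} from the results already established for the canonically ordered stopping generators, namely Theorems~\ref{thm:neccsuffSQRT}, \ref{thm:nthroots}, \ref{thm:ratls/n} and \ref{thm:BPE}, by exploiting the remarks following Theorem~\ref{thm:allstop} that allow the cyclic order of the stopping generators to be permuted. First I would observe that $(X,Y)$ being discrete stopping generators means, after a cyclic permutation, we may take the ordered triple of transformations to be $X, Y, X^{-1}Y$ with half-turn sides $L, L_X, L_Y$ exactly as in the hexagon $Ax_X, L, Ax_Y, L_Y, Ax_{X^{-1}Y}, L_X$; this is the same combinatorial picture as the $A,B,A^{-1}B$ hexagon of Theorem~\ref{thm:nthroots} under the relabeling $A\mapsto X$, $B\mapsto Y$, $A^{-1}B\mapsto X^{-1}Y$, with the single difference that here the root is taken of $X$ rather than of $B$. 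So the real content is to re-run the argument of Theorem~\ref{thm:nthroots} with the roles of the two "outer" generators of the hexagon interchanged.

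The key steps, in order, would be: (1) Fix the convex stopping hexagon for $(X,Y)$ and note that $L_{X^{r/n}}$, the geodesic with $X^{r/n}=H_L\circ H_{L_{X^{r/n}}}$, emanates from $Ax_X$ (a geodesic, point in the interior, or boundary point according as $X$ is hyperbolic, elliptic, or parabolic) and, being perpendicular to or through $Ax_X$, must exit the convex hexagon through one of the three sides $L_Y$, $Ax_Y$, or $Ax_{X^{-1}Y}$ — it cannot cross $L$, $Ax_X$, or $L_X$ for the same convexity/separation reasons given in Section~\ref{sec:nth}. (2) Case~I: when no $L_{X^{r/n}}$ hits a vertex, split into subcases IH, IP, IE according to the type of $X^{-1}Y$, and in each subcase apply the "half-turn lines bound a region" criterion of the Lemma of Section~\ref{sec:first} / Theorem~\ref{thm:neccsuffSQRT} to the sub-hexagons cut off by $L_{X^{s/n}}$: if $L_{X^{1/n}}$ already exits through $Ax_{X^{-1}Y}$ the whole picture is one hexagon with half-turn sides $L, L_{X^{1/n}}, L_X$ and $\hat G$ is discrete and free; if the family of $L_{X^{s/n}}$ migrates from $Ax_Y$ to $Ax_{X^{-1}Y}$ across a splitting pair $s,s+1$ one again decomposes into two region-bounding hexagons; if some $L_{X^{s/n}}$ meets $L_Y$ then $H_{L_{X^{s/n}}}H_{L_Y}$ is elliptic, killing freeness, and discreteness survives only when that rotation is primitive, otherwise one passes to the elliptic case of the GM algorithm. (3) Case~II: a vertex intersection at $L_Y\cap Ax_Y$ or $L_Y\cap Ax_{X^{-1}Y}$ — boundary vertex gives discrete and free by Poincaré, interior vertex gives an elliptic whose primitivity decides discreteness. (4) Cases~III and IV: when $X$ is parabolic or primitive elliptic, observe $L_{X^{r/n}}$ still emanates from the point $Ax_X$ and the exit options are unchanged, so Theorem~\ref{thm:main} (equivalently the $B$-parabolic/elliptic conclusions of Theorem~\ref{thm:BPE}) applies verbatim, with the group discrete but, in the elliptic case, not free.

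The step I expect to be the main obstacle is verifying carefully in Case~I that interchanging the roles of $X$ and $Y$ (root on the first rather than the second outer generator) really does preserve all the orientation conventions — specifically the assumption recorded in the third Remark after Theorem~\ref{thm:allstop} that the hexagon lies to the right of $L$ oriented from $Ax_X$ toward $Ax_Y$ and that the smaller rotation angles lie inside. Under a cyclic relabeling the "core geodesic" $L$ and its orientation must be reassigned consistently, and one has to check that $X^{r/n}=H_L\circ H_{L_{X^{r/n}}}$ produces $L_{X^{r/n}}$ on the correct side so that the sub-hexagons genuinely have their half-turn geodesics bounding a region with the Poincaré vertex-angle hypotheses intact; the splitting-integer bookkeeping (which $L_{X^{s/n}}$ exits through which side, and at which $s$ the jump occurs) then goes through as in Theorem~\ref{thm:nthroots}. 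Once the orientation consistency is pinned down, everything else is a direct transcription.

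\begin{proof}
After a cyclic permutation of the stopping generators (justified by the Remarks following Theorem~\ref{thm:allstop}) we may assume the convex stopping hexagon has sides $Ax_X, L, Ax_Y, L_Y, Ax_{X^{-1}Y}, L_X$ with $L$ the core geodesic oriented from $Ax_X$ toward $Ax_Y$, and with the smaller rotation angles of any elliptic or parabolic generators interior to the hexagon, as in the Remarks. This is combinatorially identical to the hexagon of Theorem~\ref{thm:nthroots} under the relabeling $A\mapsto X$, $B\mapsto Y$, $A^{-1}B\mapsto X^{-1}Y$, except that the root is now taken of the generator $X$ associated to the side $Ax_X$ adjacent to the start of $L$, rather than of $Y$.

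\textbf{Case I.} Suppose no $L_{X^{r/n}}$ meets a vertex. Since $X^{r/n}=H_L\circ H_{L_{X^{r/n}}}$, the geodesic $L_{X^{r/n}}$ is perpendicular to $Ax_X$ (when $X$ is hyperbolic) or passes through the point $Ax_X$ (when $X$ is elliptic or parabolic), so its segment inside the convex hexagon cannot cross $L$, $Ax_X$, or $L_X$; it therefore exits through $L_Y$, $Ax_Y$, or $Ax_{X^{-1}Y}$. In subcase \textbf{IH} ($X^{-1}Y$ hyperbolic): if $L_{X^{1/n}}\cap Ax_{X^{-1}Y}\ne\emptyset$, the whole hexagon has half-turn sides $L, L_{X^{1/n}}, L_X$ bounding a region, so by the Lemma of Section~\ref{sec:first} $\hat G=\langle X^{1/n},Y\rangle$ is discrete and free; if instead $L_{X^{s/n}}\cap Ax_Y\ne\emptyset$ and $L_{X^{(s+1)/n}}\cap Ax_{X^{-1}Y}\ne\emptyset$ for a splitting pair $s, s+1$, then $L_{X^{s/n}}$ cuts the hexagon into two sub-hexagons whose half-turn geodesics each bound a region, and a generalized ping-pong (Poincar\'e polygon) argument as in Theorem~\ref{thm:nthroots} gives discreteness and freeness; conversely if neither holds then some $L_{X^{s/n}}$ exits through $L_Y$, making $H_{L_{X^{s/n}}}H_{L_Y}$ elliptic, so $\hat G$ is not free, and it is discrete only when that rotation is primitive, else one passes to the elliptic case of the GM algorithm. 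Subcases \textbf{IP} and \textbf{IE} are identical after replacing $Ax_{X^{-1}Y}$ by, respectively, the boundary point or interior point that is its axis, and tracking the primitivity condition on $H_{L_{X^{s/n}}}H_{L_Y}$ exactly as in the corresponding parts \textbf{P} and \textbf{E} of Theorem~\ref{thm:nthroots}.

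\textbf{Case II.} If some $L_{X^{r/n}}$ meets a vertex, the only vertices it can reach are $L_Y\cap Ax_Y$ and $L_Y\cap Ax_{X^{-1}Y}$. A boundary vertex gives half-turn geodesics still bounding a region (with intersections only on $\partial\HH^2$), so by the Lemma the group is discrete and free; an interior vertex produces an elliptic element, and the group is discrete precisely when its rotation is primitive, otherwise one applies the elliptic cases of the algorithm.

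\textbf{Cases III and IV.} If $X$ is parabolic or primitive elliptic, the segment of $L_{X^{r/n}}$ still emanates from the single point $Ax_X$ and passes through the interior of the hexagon, so the exit options of Cases I--II are unchanged; hence the conclusions of Theorem~\ref{thm:main} apply verbatim, with $\hat G$ discrete, and in the elliptic case discrete but not free. This proves all assertions of the theorem.
\end{proof}
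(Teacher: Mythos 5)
Your proposal is correct and follows essentially the same route as the paper, whose proof is the one-line statement that one applies Theorems \ref{thm:nthroots}, \ref{thm:BPE} and \ref{thm:main} to the permuted triple $(X,Y,X^{-1}Y)$. You simply spell out in more detail the relabeling, the exit-side analysis, and the orientation check that the paper leaves implicit.
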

\begin{proof}
Applying Theorems
 \ref{thm:nthroots}, \ref{thm:BPE} and \ref{thm:main} to the permutation $(X,Y,X^{-1}Y)$ of the triple $(X,Y,X^{-1}Y)$.
\end{proof}

If $s$,   $n$, $w$ and $r$ are positive integers with $s/n > 1$ and $S = wn +r,$  then  $G = \langle X^w,Y \rangle$ is discrete whenever $\langle X, Y \rangle$ is.  Applying the above we have immediately
\begin{thm} \label{thm:moregen} {\rm [Powers of Roots]}
Let  $s$,   $n$, $w$ and $r$ be positive integers with $s/n > 1$ and $s = wn +r$. The group $  \langle X^w,Y \rangle$ is discrete whenever $\langle Z, Y \rangle$ is where  $Z=X^{r/n}$ and the discreteness of $\langle Z, Y \rangle$ can be determined by Theorem \ref{thm:ratlpowersX}
\end{thm}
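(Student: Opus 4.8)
The plan is to reduce Theorem \ref{thm:moregen} directly to the results already established, with essentially no new geometric content. The key observation is the elementary group-theoretic fact: if $\langle X, Y\rangle$ is discrete, then so is $\langle X^w, Y\rangle$ for any positive integer $w$, since it is a subgroup. So the only substantive claim is that the discreteness of $\langle X^w, Y\rangle$ is equivalent to the discreteness of $\langle X^{r/n}, Y\rangle$ where $s = wn+r$, $Z = X^{r/n}$, and that the latter is decided by Theorem \ref{thm:ratlpowersX}. First I would write $X^{s/n} = X^{w}X^{r/n} = (X^{w})\cdot Z$ where $Z = X^{r/n}$ is the $n$-th root of $X^{w \cdot 0 + r}$... more carefully, $X^{s/n}$ is by definition the transformation $T$ with $T^n = X^s$, and since $X^s = (X^n)^w \cdot X^r$ one has $X^{s/n} = X^w \cdot X^{r/n}$ because $n$-th roots of hyperbolics and parabolics are unique and commute with all powers of $X$ (they share the same axis), and in the elliptic case one works in $PSL(2,\CC)$ where again all powers of a fixed elliptic commute. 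Hence $\langle X^{s/n}, Y\rangle = \langle X^w X^{r/n}, Y\rangle$.

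Next I would set $Z = X^{r/n}$ and observe that $\langle Z, Y\rangle$ contains $\langle X^w, Y\rangle$ is \emph{not} quite what we want; rather the point is that $Z^n = X^r$ and $Z$ has the same axis as $X$, so $\langle X^{s/n}, Y \rangle$ and $\langle Z, Y\rangle$ differ by an element of $\langle X\rangle \cap \langle Z\rangle$. The cleanest route: since $\langle X, Y\rangle$ discrete $\implies \langle X^w, Y\rangle$ discrete, and since $X^w$ is itself a power of $X$ hence of the same type (hyperbolic, parabolic, or primitive elliptic — noting that a power of a primitive elliptic of finite order is still of finite order), the pair $(X^w, Y)$, after running the GM algorithm if necessary, is again associated to a discrete stopping configuration, and $Z = (X^w)^{r/n}$... but this is circular in the rational parameter. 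The honest reduction is simply: apply Theorem \ref{thm:ratlpowersX} to the pair $(X, Y)$ with rational power $s/n$ directly — Theorem \ref{thm:ratlpowersX} already handles all $r$ with $1 \le r \le n$, and the statement $s = wn + r$ with the remark that $\langle X^w, Y\rangle \le \langle X, Y\rangle$ is discrete simply records that the ``integer part'' $w$ contributes nothing new.

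Thus the proof I would write is short: state that $X^{s/n} = X^w \cdot X^{r/n}$ by definition and uniqueness of roots (with the $PSL(2,\CC)$ convention for elliptics from Section \ref{sec:not}); observe $Z := X^{r/n}$ satisfies $Z = X^{s/n} X^{-w}$, so $\langle Z, Y\rangle \subseteq \langle X^{s/n}, Y, X^w\rangle$ and conversely $X^{s/n} = X^w Z$, giving $\langle X^{s/n}, Y\rangle \subseteq \langle Z, Y, X^w\rangle$; since $X^w \in \langle X\rangle$ and $\langle X, Y\rangle$ is discrete, $\langle X^w\rangle$ is a discrete cyclic group, and the relevant subgroup relations show $\langle X^{s/n}, Y\rangle$ is discrete iff $\langle Z, Y\rangle$ is; finally invoke Theorem \ref{thm:ratlpowersX} applied to $(Z,Y)$, noting $Z = X^{r/n}$ with $1 \le r \le n$ so that theorem applies verbatim after cyclically permuting to $(Z, Y, Z^{-1}Y)$ as in the proof of Theorem \ref{thm:ratlpowersX}.

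The main obstacle — really the only subtle point — is justifying that passing from $\langle X^{s/n}, Y\rangle$ to $\langle X^{r/n}, Y\rangle = \langle Z, Y\rangle$ does not change discreteness, i.e. handling the ``$X^w$'' factor correctly. One must be careful that $X^w$ and $Z$ commute (they do: shared axis / simultaneous diagonalizability), that $\langle X^{s/n}, Y\rangle$ and $\langle Z, Y\rangle$ generate the same group up to the cyclic factor $\langle X \rangle$-part, and in the elliptic case that taking $n$-th roots is being done consistently in $PSL(2,\CC)$ so that $X^w X^{r/n} = X^{s/n}$ genuinely holds as a chosen branch. Once these commutation and consistency facts are recorded, the theorem is an immediate corollary of Theorem \ref{thm:ratlpowersX}, which is exactly what the statement ``Applying the above we have immediately'' signals.
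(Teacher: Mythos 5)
Your proposal follows essentially the same route as the paper: the paper's entire justification is the subgroup observation that $\langle X^w, Y\rangle \le \langle X, Y\rangle$ is discrete whenever $\langle X, Y\rangle$ is, followed by an appeal to Theorem \ref{thm:ratlpowersX} for $Z = X^{r/n}$, which is exactly your core reduction. One caution: the stronger equivalence you sketch (discreteness of $\langle X^{s/n}, Y\rangle$ if and only if that of $\langle Z, Y\rangle$, argued by adjoining $X^w$) is more than the theorem asserts and does not follow from the subgroup containments alone, since $\langle Z, Y, X^w\rangle$ contains the finer cyclic group $\langle X^{\gcd(wn,r)/n}\rangle$ and its discreteness is not implied by that of $\langle Z, Y\rangle$ — but this extra claim is not needed for the statement as given.
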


\section{Miscellaneous Remarks}
 \label{sec:nonstop}
 \begin{rem} {\bf Roots of Non-stopping generators}
{\rm A generator $X$ for a rank two discrete free group $G= \langle A, B \rangle$ is a {\sl primitive generator} if there exists an element $Y$ such that $G= \langle X, Y \rangle$
\cite{MKS}.  The pair $(X,Y)$ is a called a 
{\sl a primitive pair}.

Given a primitive pair, if $G$ is discrete and free, there is a sequence of integers, known as  the F-sequence or the Fibonacci sequence, $[n_1,...,n_t]$ such that the sequence   stops at a pair $(C,D)$ of discrete stopping generators after applying appropriate Nielsen transformations determined by the $n_i$  starting with the pair $(X,Y)$.
Using the reverse $F$ sequence, one can write $X$ and $Y$ as  words in the stopping generators and thus obtain $(X,Y)$ as words in $(C,D)$. One can apply the GM algorithm to $(X^{r/n},Y)$ to see whether the group is discrete or not.
Starting with $X$ and $Y$ written as words in $C$ and $D$ will often shorten the implementation of the algorithm
Alternately, if it is known that $(X,Y)$ is discrete and free, one can apply the GM algorithm to find its stopping generators $(C,D)$ and then write $(X,Y^{r/n})$ as words in $(C,D)$ before running the algorithm.}
\end{rem}

\begin{rem}{\rm
If $G$ contains elliptic elements, there is an {\sl extended $F$-sequence} \cite{Vidur}. It contains extra terms that correspond geometrically to replacing an elliptic element by its primitive power and the extra integer is that power.
The same idea applies.}
\end{rem}

\begin{rem}{\bf Matrix calculations} {\rm
Using Fenchel's theory of matrices and extending it as  necessary allows one to turn these geometric algorithms into purely computational matrix procedures. We use, for example, we some of the following results  from \cite{Fench}.
(i) If ${\bf f} \in SL(2,\CC)$ is a matrix determining a transformation $f$, then ${\bf f} - {\bf f}^{-1}$ is a {\sl line matrix}. It corresponds to a half-turn about a geodesic whose ends are the fixed points of the line matrix. The geodesic is, of course, the axis of $f$. (ii) If $f$ and $g$ are transformations with distinct axes and with line matrices $L_f$ and $L_g$. The axes of $f$ and $g$ are perpendicular if the trace of $L_gL_f =0$. This holds even if $Ax_f$ and or $Ax_g$ are improper lines.
(iii) The trace of $fg$ tells us the angle of intersection (see also
\cite{Beard}).
The computational matrix theory is developed in full detail in  \cite{JGlecturenotes}.}
\end{rem}
\begin{rem} {\rm The question has been raised as to whether this translates to an algebraic treatment using the Purtzitsky-Rosenberger trace minimizing algorithm \cite{P,PR,R}. The trace minimizing method is to replace $(A,\Bn)$ when $Tr \; A \ge Tr \; \Bn$  by one of the ordered pairs $(\Bn,A\Bn)$, $(\Bn,A\Bn^{-1})$, $(A\Bn, \Bn)$ or $(A\Bn^{-1},\Bn)$ depending upon the sizes of the traces.
Thus it seems that one would have to start the algorithm with $\langle A, \Bn \rangle$ and that even if the pair $(A,B)$ were the algebraic stopping generators, computations would have to be carried out to reflect the intersection properties or the algorithmic steps.}
\end{rem}
\section*{Acknowledgement}

The author  thanks John Parker for helpful and insightful comments during the preparation of this manuscript.

\end{document}